\newtheorem{theorem}{Theorem}
\newtheorem{corollary}{Corollary}
\def\ve{\varepsilon}
\def\vr{\varepsilon}
  \DeclareMathOperator\erfc{erfc}
\begin{document}

\title{Parameter-uniform numerical methods for singularly perturbed  parabolic problems with incompatible boundary-initial data}

\author{J.L. Gracia\footnote{IUMA - Department of Applied Mathematics, University of Zaragoza, Spain. email: jlgracia@unizar.es. The
research of this author was partly supported by the Institute of Mathematics and Applications (IUMA), the
project MTM2016-75139-R and the Diputaci\'on General de Arag\'on (E24-17R).} \, and \,
E. O'Riordan\footnote{School of Mathematical Sciences, Dublin City
University, Ireland. \qquad
email: eugene.oriordan@dcu.ie}}

\date{\today}

\maketitle

\begin{abstract}

Numerical approximations to the solution of a linear singularly perturbed parabolic reaction-diffusion problem  with incompatible bound\-ary-initial data are generated, The method involves combining the computational solution of a  classical finite difference operator on a tensor product of two piecewise-uniform Shishkin meshes  with an analytical function that captures the local nature of the incompatibility. A proof is given to show almost first order parameter-uniform convergence of these numerical/analytical approximations. Numerical results are given to illustrate the theoretical error bounds.

\end{abstract}

\maketitle

\section{Introduction}

We examine singularly perturbed parabolic problems in one space dimension, with an incompatibility between the initial condition and a boundary condition. These problems arise in  mathematical models in fluid dynamics \cite{han} and, in particular, models for flow in porous media \cite{han2}. The solutions of these problems typically exhibit boundary layers, initial layers and initial-boundary layers. In this paper we are interested in constructing a parameter-uniform numerical algorithm \cite{fhmos} for this class of singularly perturbed problems.

Numerical methods  generate finite dimensional  approximations~$U^N$ (where $N$ is the number of mesh elements used in each coordinate direction) to the continuous solution $u$ at the selected nodal points within the continuous domain $\bar Q$. A global approximation $\bar U^N $ can also be created, using a user chosen choice of interpolating basis functions. In this paper, we shall simply employ bilinear basis functions. Parameter-uniform numerical methods \cite{fhmos}  satisfy  a theoretical error bound of the form:
\[
\Vert u - \bar U^N \Vert _{\bar Q} \leq CN^{-p},\quad p >0;
\]
where $\Vert \cdot \Vert _{\bar Q} $ is the $L_\infty$ norm  on the closed domain $\bar Q$, $C$ is a generic constant,  which depends on the problem data but  is independent of $N$ and the singular perturbation $\ve$. We emphasize that this error bound estimates the pointwise  error at all points in the domain $\bar Q$ of the continuous solution. Parameter-uniform convergence at the nodes is a necessary, but not a sufficient, condition for parameter-uniform global convergence. If a numerical method is parameter-uniform at the nodes, then the distribution of the mesh points and the selected form of interpolation will determine whether the method is globally parameter-uniform or not.

Within the literature on singularly perturbed problems, there are two common approaches to designing a parameter-uniform method: fitted operator (see e.g. \cite{mos2}) or fitted mesh methods \cite{fhmos}. Fitted operator methods tend to use a quasi-uniform discretization of the domain and incorporate analytical information about the solution character within the  layers, into the choice of special basis functions (in a finite element framework) or (in the case of finite differences) by choosing a special finite difference operator that is exact in the case of constant coefficient one dimensional model problems. On the other hand, fitted mesh methods use {\it a priori} information about the layer structure to construct an appropriate non-uniform distribution of the mesh points.

For some classes of singularly perturbed problems with boundary layers, fitted operator methods on a uniform mesh exist which satisfy a  parameter-uniform error bound at the nodes, but these fitted operator methods are not globally parameter-uniformly convergent \cite{fhmos}, when some form of polynomial interpolation is employed. In the case of  one-dimensional problems not containing characteristic  boundary layers, global convergence can be guaranteed if one  subsequently incorporates  exponential splines to form the interpolant~\cite{styor}; but this form of non-polynomial spline interpolation is difficult to extend to elliptic problems in higher dimensions. Moreover, a nodally parameter-uniform fitted operator method cannot be constructed for a class of singularly perturbed heat equations, if one only uses a uniform mesh \cite{fhmos,cwi987b}. This same impasse is faced when dealing with elliptic problems, whose solutions contain characteristic boundary layers.
However, a fitted piecewise-uniform Shishkin mesh coupled with a classical discrete operator produces a parameter-uniform numerical method for  singularly perturbed heat equations \cite{ria} and for elliptic problems with characteristic layers~\cite{hemkerb}. Moreover, parameter-uniform numerical methods, using an appropriate Shishkin mesh  have been designed for a wide class of singularly perturbed problems \cite{review-GIS}. These problem classes include  problems with both boundary and initial layers.

To establish pointwise parameter uniform error bounds on numerical approximations to the solutions of singularly perturbed parabolic problems, most publications assume second level compatibility conditions and sufficient regularity of the data so that the solution is in $C^{4+\gamma}(\bar Q)$
\footnote{The space $C^{n+\gamma}(\bar Q ) $ is
the set of all functions, whose derivatives of
order $n$ are H\"{o}lder continuous of degree $\gamma >0$. That is,
\[
C^{n+\gamma } ( \bar Q ) := \left \{ z : \frac{\partial ^{i+j} z}{
\partial x^i
\partial t^j } \in C^{\gamma }(\bar Q), \ 0 \leq i+2j \leq n \right \} .
\]}
 in the closed domain $\bar Q$.  Interested readers are referred, for example, to \cite{Zhemukhov2}. In the case of singularly perturbed parabolic problems in one space dimension and using appropriate fitted meshes, these compatibility constraints can be relaxed to zero order, without an adverse effect on the rate of uniform convergence \cite{Zhemukhov1}. Hence, parameter-uniform numerical methods exist when the boundary and initial data are simply assumed to be continuous.

However, there are difficulties with constructing a fitted mesh method for problems with an incompatibility between the initial  and a boundary condition; or for a problem with a discontinuity in a boundary or the initial condition \cite{hemker, hemker2}. Hemker and Shishkin \cite{hemker2} constructed a fitted operator method on a uniform mesh, which is nodally parameter-uniform for a singularly perturbed heat equation with a discontinuity in the initial condition; but the method is not globally parameter-uniform, using bilinear interpolation. An extension of this fitted operator method to a fitted operator method on a fitted piecewise-uniform mesh was constructed in \cite{bulg2012}, but this again failed to be parameter-uniform globally, using bilinear interpolation. The interpolation  failed to produce an accurate global approximation in a neighbourhood of the point, where  the initial condition and a boundary condition were incompatible.

Another approach to dealing with a problem having discontinuous data is to replace the problem with a regularized problem with continuous data \cite{amc}. This approach is strongly related to the penalty method discussed in \cite{temam}. For example, the problem
\begin{eqnarray*}
u_t-\ve u_{xx} =f(x,t),\qquad x,t >0;\\ u(0,t) = 0,\ t >0;\quad u(x,0) = 1,\ x >0
\end{eqnarray*}
can be approximated by the solution $u^{r}$ of the regularized problem:
\begin{eqnarray*}
u^{r}_t-\ve u^{r}_{xx} =f(x,t),\qquad  x,t >0; \\ u^{r}(0,t) = 0,\ t >0; \quad u^{r}(x,0) = 1 - e^{-x/\sqrt{\ve}},\ x \geq 0.
\end{eqnarray*}
Parameter-uniform numerical approximations to $u^r$ can be generated, but (see \cite{amc}) these approximations are only accurate approximations to $u$ outside a neighbourhood of the point $(0,0)$.  In other words, this approach will not generate parameter-uniform global approximations to the original problem with an incompatibility between the boundary and initial data.

 In this paper, we examine an alternative approach to dealing with this problem class, which uses an idea examined numerically in~\cite{Flyer} in the non-singularly perturbed case
(set $\ve =1$). Given a differential operator $L$, the solution $u$  of the continuous problem
\[
L u =f, \ \hbox{in}\ Q , \quad u =g , \ \hbox{on} \ \bar Q \setminus Q =: \partial Q, \quad \hbox{where} \quad g \notin
C^0(\partial Q);
\]
is written as the sum of two components $u=s+y $. The function $s$  matches the incompatibility in the solution $u$ and the other term $y$ satisfies
the singularly perturbed problem
\[
L y =f-L s, \ \hbox{in} \ Q ,\quad y =g -s, \ \hbox{on}\  \bar Q \setminus Q , \quad \hbox{where} \quad g -s\in C^0(\partial Q).
\]
 In this paper, we design a parameter-uniform numerical method for this secondary problem, which generates a global approximation $\bar Y$ to $y$. In this way, we can generate parameter-uniform numerical approximations $s +\bar Y$ to the solution $u$ of a singularly perturbed problem with an incompatibility between the initial condition and a boundary condition.
Note that here we restrict the discussion to problems in one space dimension. Extensions of the method to two space dimensions are not obvious \cite{temam} and require further investigation.

The rest of the paper is structured as follows. In the next section, we define the problem class to be examined, we decompose the continuous solution into various components and we derive parameter-explicit bounds on the derivatives of each of these components. In  Section 3, we construct the numerical method and we establish a parameter-uniform bound on the error. In  Section 4, we present the results of some numerical experiments with a representative test problem. For the sake of completeness, we write out the compatibility conditions of levels zero, one and two in the first appendix. Finally, in a second appendix, we present some properties of an analytical function which are used in the proof of Theorem 3.

\noindent {\bf Notation.}
Throughout the paper,   $C$  denotes a generic constant that is independent of the singular perturbation parameter $\ve$ and of all discretization parameters.
  The $L_\infty$ norm on the domain $D$ shall  be denoted by $\Vert \cdot \Vert _D$  and the subscript is omitted if the domain is $\bar Q$.

\section{Continuous problem}

Consider  the singularly perturbed parabolic problem:
Find
$u: \bar Q \rightarrow \mathbb{R}$ with $Q:=(0,1)\times(0,T],$ such that
\begin{subequations} \label{Cproblem}
\begin{align}
&Lu := \vr (u_t-u_{xx})+b(x,t)u=f(x,t), \quad (x,t) \in  Q; \label{Lu}\\
&u(0,t)=g_L(t), \ u(1,t)=g_R(t) \ t\ge 0, \quad u(x,0)=\phi(x), \ 0<x<1; \label{bound+init-cond}\\
&\phi(0^+)\ne g_L(0), \quad \phi(1^-)=g_R(0),\label{discont}\\ &b_x(0,0) =0 \quad \hbox{and} \quad
b(x,t)> \beta > 0, \ \forall t \geq 0; \label{b_x(0,0)=0} \\
&f,b \in C^{4+\gamma }( \bar Q ),  \ g_L, g_R \in C^2[0,T], \ \phi \in C^4[0,1]. \label{Reg-data}
\end{align}
\end{subequations}
Observe that the solution of this problem is discontinuous at the corner $(0,0)$ of the domain $\bar Q$.
We define the related constant coefficient differential operator
\begin{equation}\label{L0}
L_0z:= \vr (z_t-z_{xx})+b(0,0)z,
\end{equation}
so that (by (\ref{b_x(0,0)=0}))  \[
\vert (L-L_0) z(x,t) \vert \leq C(x^2+t)\vert  z (x,t)\vert.
\] It is important to point out that the coefficient $b(x,t)$ can depend on both the space and time  variables. In the special case where this coefficient only depends on time, then the singularity associated with the incompatibility at $(0,0)$ can be found analytically. 

 We also assume the compatibility conditions at the point $(1,0)$
\begin{align}
&  \vr (g_R'(0^+)-\phi_{xx}(1^-))+b(1,0)g_R(0)=f(1,0); \label{First-R}\\
& \vr (g_R''(0^+) -\phi _{xxxx}(1^-))+b(1,0)(g'_R(0)+\phi _{xx}(1^-)) \nonumber \\
&  \hspace{0.2cm}+ b_t(1,0)g_R(0)+2b_{x}(1,0)\phi _x(1^-) +b_{xx}(1,0)\phi(1^-) =\bigl( f_t+f_{xx} \bigr)(1,0); \label{Second-R}
\end{align}

Here we simply assume these additional compatibility conditions in order to concentrate on the issues near $(0,0)$, associated with the lack of corresponding compatibility conditions being assumed at $(0,0)$. The numerical method presented below will satisfy the same error bound, established in Theorem 4, even when the data does not satisfy the constraints (\ref{First-R}), (\ref{Second-R}). In this section, the solution $u$ is decomposed in a sum of terms, some associated with the layers in the solution and some terms (denoted below by $A_0z_0(x,t)+A_1z_1(x,t)+A_2z_2(x,t)$) associated with the lack of compatibility being assumed at $(0,0)$. If we did not assume (\ref{First-R}), (\ref{Second-R}), then additional terms of the form  $A^R_1z_1(1-x,t)+A^R_2z_2(1-x,t)$ would be included in the expansion of the continuous solution; and the influence of these additional terms on the numerical analysis, could be tracked in the exact same way as the terms $A_1z_1(x,t)+A_2z_2(x,t)$ are handled in the error analysis below. Hence, it is solely for the sake of clarity of exposition in this section of the paper, that we assume the compatibility conditions
(\ref{First-R}), (\ref{Second-R}).

Decompose the solution of~\eqref{Cproblem} into the sum
\begin{equation}\label{A0}
u= A_0e^{-\frac{b(0,0) t}{\vr}}\erfc\left(\frac{x}{2\sqrt{t}} \right)+y, \quad A_0:= g_L(0) - \phi (0^+),
\end{equation}
where $\erfc(z)$ is the complimentary error function
$$
\erfc(z):= \frac{2}{\sqrt{\pi}} \int _{s=z}^\infty e^{-s^2} \ ds.
$$
Note that the function \[
z_0(x,t):= e^{-\frac{b(0,0) t}{\vr}}\erfc\left(\frac{x}{2\sqrt{t}} \right)
\] is the first of a family of functions defined as the solutions of the constant coefficient homogeneous
quarter plane problems, where for all $n =0,1,2, \ldots$
\begin{equation}\label{zn}
  L_0z_n =0, \ x,t >0, \
z_n(0,t)=t^ne^{-\frac{b(0,0) t}{\vr}}, \  t>0, \ z_n(x,0)=0, \ x>0.
\end{equation}
In Appendix 2, we explicitly write out several derivatives of these functions and we discuss the regularity of these functions.

In this section, we  establish {\it a priori} bounds on the derivatives of the continuous function  $y:= u -A_0z_0$, which satisfies the problem
\begin{subequations}\label{eq:ComponentY2}
\begin{eqnarray}
&Ly=f(x,t)-A_0(L-L_0)z_0(x,t), \ \hbox{ in } Q; \\
&y(0,t)=g_L(t) -A_0e^{-\frac{b(0,0) t}{\vr}}, \quad y(1,t)=g_R(t)-A_0 z_0(1,t), \quad  t\geq 0; \\
&y(x,0)=\phi(x), \quad 0 < x < 1.
\end{eqnarray}
\end{subequations}
We  introduce extended domains, where various subcomponents of the solution $y$  will be defined:
For arbitrary positive constants $p,q,r $,
\begin{eqnarray*}
\bar Q^* := [-p,1+q] \times [-r, T]; \ \bar Q^*_S := [0,1] \times [-r, T]; \
 \bar Q^*_B := [-p,1+q] \times [0, T].
\end{eqnarray*}
To avoid excessive notation, we shall denote smooth extensions of the functions $b,f$ to some larger domain  by $b^*,f^*$ (such that $f^* \vert _{\bar Q} \equiv f$), even those these extensions will be taken over different domains.

The solution of (\ref{eq:ComponentY2}) can be decomposed into a sum of a regular component $v$ and several layer components $w$ (with a subscript to identify the location of the layer)  defined as follows:
\begin{subequations}\label{decomp}
\begin{equation}
y= v+w_L+w_R+ w_I+w_{IB};
\end{equation}
where the regular component $v$ satisfies the problem
\begin{equation}
L^*v^*=f^*, \quad \hbox{ in } Q^*,\quad v^*=v^*, \ \hbox{ on }  \partial  Q^*. \end{equation}
The boundary/initial values for the regular component are determined from the reduced solution $v_0$ and a correction $v_1$. We write $v^* =v^*_0+\vr v^*_1$, where the reduced solution $v_0$ and the correction $v_1$  are defined via
\begin{eqnarray}
&v^*_0=\bigl( \frac{f}{b}\bigr)^* , \hbox{ in } Q^*; \\
&L^*v^*_1 = (v_0)^*_{xx} - (v_0)^*_t,\quad \hbox{ in } Q^*, \quad v^*_1=0, \ \hbox{ on } \partial  Q^* \label{v_1}.
\end{eqnarray}
The boundary layer components $w_L,w_R$ satisfy the homogeneous problems
\begin{eqnarray}
&L^*w^*_R=L^*w^*_L=0, \ \hbox{ in } Q^*_S;\\
& w^*_L(0,t)=(y-v^*)^*(0,t), \  w^*_L(x,-r)=0, \ w^*_L(1,t)=0, \  \hbox{ on } \partial  Q_S^*;  \quad \\
&w^*_R(0,t)=0, \    w^*_R(x,-r)=0, \ w^*_R(1,t)=(y-v^*)^*(1,t), \  \hbox{ on } \partial  Q_S^*. \quad
\end{eqnarray}
The initial layer function $w_I$ satisfies the problem
\begin{eqnarray} \label{initial-layer}
&L^*w^*_I=0  \,  \quad \hbox{in}\quad  Q^*_B; \\
& w^*_I(-p,t)=0, \  w^*_I(x,0)=(y-v^*)^*(x,0), \ w^*_I(1+q,t)=0,  \hbox{ on } \partial  Q_B^*. \label{initial-layer-icbc}
\end{eqnarray}
Having defined the problems over the extended domains, to avoid compatibility issues, all of these components are in $C^{4+\gamma}(\bar Q)$.

Finally,  the initial-boundary  layer component  $w_{IB}$ satisfies the problem
\begin{eqnarray}
&Lw_{IB}=-A_0(L-L_0)z_0(x,t), \quad (x,t)\in Q; \label{initial-boundary-layera}\\
&w_{IB}(0,t)=-w^*_I(0,t),    \quad w_{IB}(1,t)=-w^*_I(1,t), \quad t \ge 0; \label{initial-boundary-layerb}\\
& w_{IB}(x,0)=-w^*_L(x,0)-w^*_R(x,0), \quad 0 < x < 1. \label{initial-boundary-layerc}
\end{eqnarray}
\end{subequations}
 The regularity of this key  component  $w_{IB}$ is discussed below in Theorem \ref{thm-3}.

\begin{theorem} \label{th:vwLRbounds}
 For all $0 \leq i+2j \leq 4$, we have the following bounds.
\begin{subequations}
For the regular component $v \in C^{4+\gamma}(\bar Q)$,
\begin{eqnarray}\label{vbounds}
\left \Vert \frac{\partial ^{i+j} v}{\partial x^{i}\partial t ^{j}}  \right\Vert  \leq C \left(1+\vr ^{1-(i/2 +j)}\right);
\end{eqnarray}
and, for  all points $(x,t)\in Q$, the boundary layer components $w_L,w_R \in C^{4+\gamma}(\bar Q)$ satisfy
\begin{eqnarray}
&\vert w_L(x,t) \vert \leq C e^{-\sqrt{\frac{\beta}{\vr}}x};  \quad
\vert w_R(x,t) \vert \leq C e^{-\sqrt{\frac{\beta}{\vr}}(1-x)}; \, \label{layer-bounds}\\
& \left  \vert \frac{\partial ^{i+j}}{\partial x^{i}\partial t ^{j}} w_{L}(x,t)\right \vert  \leq C \vr ^{-(i/2 +j)}e^{-\sqrt{\frac{\beta}{\vr}}x},\\
& \left \vert \frac{\partial ^{i+j}}{\partial x^{i}\partial t ^{j}} w_{ R}(x,t) \right \vert \leq C \vr ^{-(i/2 +j)}e^{-\sqrt{\frac{\beta}{\vr}}(1-x)} .\label{explayer-bounds}
\end{eqnarray}
In addition, for $1\leq j \leq 2$, the time derivatives satisfy
\begin{equation}\label{bnds-time.dervs}
\max \left\{ \left\Vert \frac{\partial ^{j}w_{L}}{\partial t ^{j}} \right\Vert, \left\Vert \frac{\partial ^{j}w_{ R}}{\partial t ^{j}}  \right\Vert \right\}\leq C \vr ^{1-j}.
\end{equation}
\end{subequations}
\end{theorem}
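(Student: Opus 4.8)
The plan is to estimate the five terms of the decomposition (\ref{decomp}) separately, using throughout the comparison (maximum) principle — available because $b$ and its extension $b^*$ are bounded below by $\beta>0$, so $L$ and $L^*$ are inverse-monotone on the relevant domains — together with explicit barrier functions, and classical Schauder estimates for uniformly parabolic operators applied after a parabolic rescaling of the boundary-layer region. For the regular component, since $f,b\in C^{4+\gamma}(\bar Q)$ and $b\ge\beta$ the reduced solution $v_0=f/b$ lies in $C^{4+\gamma}(\bar Q)$ with $\bigl\|\,\frac{\partial^{i+j}v_0}{\partial x^i\partial t^j}\,\bigr\|\le C$ for $i+2j\le4$, while $v_1$ solves the singularly perturbed problem (\ref{v_1}) on $Q^*$ with the smooth bounded source $g:=(v_0)_{xx}-(v_0)_t$ and homogeneous parabolic data; the constant barrier $\|g\|/\beta$ gives $\|v_1\|\le C$, and because $\bar Q$ is separated from the parabolic boundary of $Q^*$ by a distance at least $\min\{p,q,r\}>0$, the layers of $v_1$ decay to exponentially small size on $\bar Q$, so interior Schauder estimates give $\bigl\|\,\frac{\partial^{i+j}v_1}{\partial x^i\partial t^j}\,\bigr\|_{\bar Q}\le C$ and $v_1\in C^{4+\gamma}(\bar Q)$. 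Adding the two pieces yields (\ref{vbounds}).

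For the pointwise layer bounds, note first that the data of (\ref{eq:ComponentY2}) is bounded uniformly in $\vr$: $|z_0|\le1$ and $|(L-L_0)z_0|=|(b-b(0,0))z_0|\le C$ on $\bar Q$, so the comparison principle gives $\|y\|\le C$, whence $|w_L(0,t)|=|(y-v^*)(0,t)|\le C$ and $|w_R(1,t)|\le C$. Since $L^*\bigl(C e^{-\sqrt{\beta/\vr}\,x}\bigr)=(b^*-\beta)\,C e^{-\sqrt{\beta/\vr}\,x}\ge0$ on $Q_S^*$ and $C e^{-\sqrt{\beta/\vr}\,x}$ dominates $\pm w_L$ on $\partial Q_S^*$, the comparison principle gives the first estimate of (\ref{layer-bounds}); the mirror barrier $C e^{-\sqrt{\beta/\vr}(1-x)}$ handles $w_R$.

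For the derivative bounds, fix $(x_0,t_0)\in Q$ and pass to stretched variables $\xi=(x-x_0)/\sqrt\vr$, $\tau=(t-t_0)/\vr$; then $\hat w_L(\xi,\tau):=w_L(x_0+\sqrt\vr\,\xi,\,t_0+\vr\,\tau)$ solves the uniformly parabolic equation $\hat w_{L,\tau}-\hat w_{L,\xi\xi}+\hat b\,\hat w_L=0$ on a unit box, with $\hat b$ bounded in $C^{2+\gamma}$ there and, by (\ref{layer-bounds}), $\|\hat w_L\|_\infty\le C e^{-\sqrt{\beta/\vr}\,x_0}$ (using $e^{-\sqrt{\beta/\vr}(x_0-\sqrt\vr)}\le e^{\sqrt\beta}e^{-\sqrt{\beta/\vr}\,x_0}$). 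Interior Schauder estimates — up-to-the-boundary ones, exploiting the (rescaled, hence bounded) data, when the box meets $x=0$ or $t=-r$ — bound the derivatives of $\hat w_L$ by $\|\hat w_L\|_\infty$; undoing the scaling gives $\bigl|\,\frac{\partial^{i+j}w_L}{\partial x^i\partial t^j}(x_0,t_0)\,\bigr|\le C\vr^{-(i/2+j)}e^{-\sqrt{\beta/\vr}\,x_0}$, i.e. (\ref{explayer-bounds}), and symmetrically for $w_R$; running the same argument over the interior of the extended spatial strip also gives $w_L,w_R\in C^{4+\gamma}(\bar Q)$.

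For the sharpened time derivatives, differentiate the homogeneous equation $L^*w_L=0$ in $t$: the function $\partial_t^j w_L$ solves $L^*(\partial_t^j w_L)=R_j$, where by the previous step $|R_1|=|b_t^*w_L|$ and $|R_2|=|2b_t^*\partial_t w_L+b_{tt}^*w_L|$ are bounded by $C\vr^{1-j}e^{-\theta x/\sqrt\vr}$ for any fixed $\theta\in(0,\sqrt\beta)$, and the data of $\partial_t^j w_L$ vanishes at $x=1$ and (through the equation, since $w_L(\cdot,-r)\equiv0$) at $t=-r$. It remains to produce a barrier dominating $R_j$ together with the boundary values $\partial_t^j[(y-v^*)(0,t)]$, after which the comparison principle yields (\ref{bnds-time.dervs}). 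I expect the sharp control of this boundary data to be the main obstacle: the rapidly varying factor $e^{-b(0,0)t/\vr}$ carried by $y(0,t)$ makes a crude estimate of $\partial_t^j[(y-v^*)(0,t)]$ only $O(\vr^{-j})$ — the generic rate already contained in (\ref{explayer-bounds}) — so the improvement to $O(\vr^{1-j})$ requires isolating this analytic contribution explicitly, using the functions $z_n$ and the derivative bounds recorded in Appendix 2, and then handling the smoother remainder by the barrier argument, with the hypotheses $g_L,g_R\in C^2[0,T]$ of (\ref{Reg-data}) entering at this point.
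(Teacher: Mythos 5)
Your treatment of \eqref{vbounds}, \eqref{layer-bounds} and \eqref{explayer-bounds} is essentially the paper's route (barriers plus the maximum principle for the pointwise bounds, then a parabolic rescaling and local Schauder estimates, as in the cited argument of \cite{ria}, to convert the bound on $w_L$ in a slightly larger box into derivative bounds with the exponential factor). One caveat there: your claim that $\bigl\Vert \frac{\partial^{i+j}v_1}{\partial x^i\partial t^j}\bigr\Vert_{\bar Q}\le C$ for all $i+2j\le 4$ does not follow from interior Schauder estimates as you state it. The operator degenerates as $\vr\to 0$, so the Schauder estimate must be applied in the stretched variables $\zeta=x/\sqrt\vr$, $\eta=t/\vr$, and undoing that scaling costs $\vr^{-(i/2+j)}$; getting genuinely $O(1)$ derivatives of $v_1$ on $\bar Q$ would require a further asymptotic expansion of $v_1$ itself, which you do not carry out. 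This is harmless for the result, because the correctly obtainable bound $\Vert\partial^{i+j}v_1\Vert\le C(1+\vr^{-(i/2+j)})$ combined with the factor $\vr$ in $v=v_0+\vr v_1$ already gives \eqref{vbounds} — which is exactly the paper's argument — but you should say this rather than overclaim.

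The genuine gap is \eqref{bnds-time.dervs}: you set up the right problem (differentiate $L^*w_L^*=0$ in $t$, so $\partial_t^j w_L$ solves a parabolic problem with right-hand side involving $b_t^* w_L$, zero data at $x=1$ and $t=-r$), but you then stop, conceding that you cannot control the boundary values $\partial_t^j\bigl[(y-v^*)(0,t)\bigr]$ beyond the generic $O(\vr^{-j})$ rate and only sketching how one might repair this. As submitted, the bound $\Vert\partial_t^j w_L\Vert\le C\vr^{1-j}$ is therefore not proved. For the record, the paper completes this step differently and more economically: it formulates the problem for $\partial_t w_L^*$ (with source $-b_t^* w_L^*$), passes to the stretched variables and repeats the earlier Ladyzhenskaya-type derivative-bound argument to obtain $\Vert\frac{\partial^{i+j}}{\partial x^i\partial t^j}(\partial_t w_L^*)\Vert\le C(1+\vr^{-(i/2+j)})$ for $0\le i+2j\le 2$, which yields both $j=1$ and $j=2$ of \eqref{bnds-time.dervs} at once; it does not introduce the $z_n$-based splitting of the boundary data that you propose. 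Your observation that the factor $e^{-b(0,0)t/\vr}$ in $y(0,t)$ makes the size of this boundary data delicate is a fair and pertinent one (the paper passes over it without comment), but flagging an obstacle is not the same as overcoming it, so this part of the theorem remains unproven in your proposal.
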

\begin{proof}
Using the stretched variables
 \[
\zeta = \frac{x}{\sqrt{\ve}} \quad \hbox{and} \quad \eta = \frac{t}{\ve}
\]
 problem (\ref{v_1}) transforms into the problem
\[
\tilde v _{\zeta \zeta} - \tilde v_\tau +\tilde b \tilde v = \tilde f_1,\quad  \hbox{in} \left(\frac{-p}{\sqrt{\ve}},\frac{1+q}{\sqrt{\ve}}\right)\times \left(\frac{-r}{\ve},\frac{T}{\ve}\right],
\]
where \[
 \tilde v(\zeta, \tau) = v^*_1(x,t) \quad \hbox{and} \quad 
\tilde f_1(\zeta, \tau) = (v_0)^*_{xx} - (v_0)^*_t .
\]
Applying the {\it a priori} bounds \cite{ladyz} on the derivatives of the solution $\tilde v$, and transforming back to the original variables $(x,t)$, we get that
\begin{eqnarray}\label{crude}
\left\Vert \frac{\partial ^{i+j}v_1^*}{\partial x^{i}\partial t ^{j}}  \right\Vert  \leq C \left(1+\vr ^{-(i/2 +j)}\right),\ 0 \leq i+2j \leq 4,
\end{eqnarray}
and we have deduced the bounds (\ref{vbounds}) on the derivatives of the regular component.
A maximum principle, the assumption $b(x,t) > \beta $, the corresponding bounds (\ref{crude}) and the argument from
\cite[Theorem 4]{ria} yield the bounds on the boundary layer components $w_L,w_R$ (\ref{layer-bounds})-(\ref{explayer-bounds}).

Note that
\begin{eqnarray*}
&L^*\frac{\partial w^*_L}{\partial t} = -\frac{\partial b}{\partial t} w^*_L, \ \hbox{ in } Q^*_S;\\
&\frac{\partial w^*_L}{\partial t} (0,t)= \frac{\partial (y-v^*)^*}{\partial t}(0,t), \  \frac{\partial w^*_L}{\partial t}(x,-r)=0, \ \frac{\partial w^*_L}{\partial t}(1,t)=0, \  \hbox{ on } \partial  Q_S^*.
\end{eqnarray*}
Using the stretched variables and the earlier argument, we deduce that
\[
\left\Vert \frac{\partial ^{i+j}}{\partial x^{i}\partial t ^{j}} \Bigl( \frac{\partial w^*_L}{\partial t} \Bigr) \right\Vert  \leq C \left(1+\vr ^{-(i/2 +j)}\right),\ 0 \leq i+2j \leq 2,
\]
which yields the final bounds (\ref{bnds-time.dervs}) on the time derivatives of the layer components.
\end{proof}

\begin{theorem}
\begin{subequations} \label{sub:initial-layer-bounds}
For the initial layer component $w_I\in  C^{4+\gamma}(\bar Q)$ and
\begin{eqnarray}
\vert w_I(x,t) \vert &\leq& C e^{-\frac{ \beta t}{\vr}}, \, \ (x,t)\in Q;\label{initial-layer-bounds}\\
\left\vert \frac{\partial ^{i+j}w_I }{\partial x^{i}\partial t ^{j}}(x,t)  \right\vert  &\leq& C \vr ^{-i/2}\vr ^{-j}e^{-\frac{ \beta  t}{\vr}},\ 0 \leq i+2j \leq 4; \label{initial-layer-bounds-der}\\
\left\vert \frac{\partial ^{i} w_I}{\partial x ^{i}} (x,t) \right\vert  &\leq& C (1+\vr ^{1-i/2}) e^{-\frac{ \beta t}{\vr}},\quad i=1,2,3,4.
\label{initial-layer-bounds-der-improved}
\end{eqnarray}
\end{subequations}
\end{theorem}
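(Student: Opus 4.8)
The plan is to prove the bounds on the initial layer component $w_I$ by a stretched-variable argument parallel to the one used for the boundary layers in Theorem \ref{th:vwLRbounds}, combined with a barrier-function (maximum principle) argument. First I would note that $w_I$ satisfies the homogeneous equation $L^*w^*_I = 0$ in $Q^*_B$ with zero lateral data at $x=-p$ and $x=1+q$ and initial data $w^*_I(x,0) = (y-v^*)^*(x,0)$, which is a bounded smooth function (it is $\phi(x) - v^*(x,0)$ plus the boundary-layer corrections evaluated at $t=0$, all $O(1)$). To obtain \eqref{initial-layer-bounds}, I would use the barrier function $\Psi^\pm(x,t) = C e^{-\beta t/\vr} \pm w_I(x,t)$; since $L^*(e^{-\beta t/\vr}) = (b^* - \beta)e^{-\beta t/\vr} \geq 0$ by \eqref{b_x(0,0)=0}, and the lateral and initial data are dominated by $Ce^{-\beta t/\vr}$ for $C$ large enough (using $e^{-\beta t/\vr} = 1$ at $t=0$ and $\ge 0$ on the lateral boundaries while $w_I = 0$ there), the maximum principle for $L^*$ gives \eqref{initial-layer-bounds}.

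Next, for the derivative bounds \eqref{initial-layer-bounds-der}, I would pass to the stretched variables $\zeta = x/\sqrt{\vr}$, $\eta = t/\vr$. In these variables $\tilde w_I(\zeta,\eta) := w^*_I(x,t)$ satisfies $\tilde w_{I,\zeta\zeta} - \tilde w_{I,\eta} + \tilde b \tilde w_I = 0$ on a domain whose $\eta$-length is $T/\vr$, with bounded data; a minor technical point is that although the domain is long in $\eta$, the interior Schauder / parabolic {\it a priori} estimates of \cite{ladyz} are applied on unit-sized neighbourhoods of each point, so the constants are uniform. Combining these local estimates with the already-established exponential decay \eqref{initial-layer-bounds} — more precisely, applying the interior estimates to $e^{\beta(\eta-\eta_0)/\vr \cdot \vr}\,$... in effect, working with the rescaled function $e^{\beta t/\vr} w_I$ on each strip — yields $|\partial^{i+j}_{\zeta^i\eta^j}\tilde w_I| \le C e^{-\beta\eta/2}$ uniformly, and transforming back (each $\partial_x$ brings a factor $\vr^{-1/2}$, each $\partial_t$ a factor $\vr^{-1}$) produces \eqref{initial-layer-bounds-der}, possibly with $\beta$ replaced by $\beta/2$, which is harmless after renaming.

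The sharper spatial bound \eqref{initial-layer-bounds-der-improved}, which replaces $\vr^{-i/2}$ by $1 + \vr^{1-i/2}$, is the step I expect to be the main obstacle, since it is a genuine improvement over what the crude stretched-variable scaling gives for $i \geq 2$. The idea is to differentiate the equation in $x$: $W := \partial_x^i w^*_I$ satisfies $L^* W = -\sum (\text{lower-order } x\text{-derivatives of } b^* \text{ times lower-order } x\text{-derivatives of } w_I)$ with boundary/initial data that are $O(1)$ in $x$ (this is where the smoothness $\phi \in C^4[0,1]$, $b \in C^{4+\gamma}$ and the fact that $v^*$, $w^*_L$, $w^*_R$ have $x$-derivatives controlled by \eqref{vbounds}, \eqref{explayer-bounds} are used — the initial data $\partial_x^i w_I(x,0)$ is bounded for $i \le 4$). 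One then writes $W = W_1 + W_2$ where $W_1$ absorbs the $O(1)$ initial/boundary data and the smooth part of the right-hand side, with $W_1$ bounded by $C(1 + \vr^{1-i/2})e^{-\beta t/\vr}$ obtained recursively from the already-proven bounds on lower derivatives, while $W_2$ handles the genuinely layer-type remainder via a barrier of the form $C\vr^{1-i/2}e^{-\beta t/\vr}$; the key algebraic point is that each application of $(L^*)^{-1}$ to a right-hand side of size $\vr^{-k}e^{-\beta t/\vr}$, when that right-hand side carries an explicit factor of $\vr$ from the leading term $\vr(\partial_t - \partial_{xx})$, gains a power of $\vr$. I would carry out this recursion on $i = 1, 2, 3, 4$ in turn, at each stage feeding in the bound for $i-1$ and $i-2$, exactly as the corresponding improvement is obtained for boundary layers in the literature, and conclude \eqref{initial-layer-bounds-der-improved}.
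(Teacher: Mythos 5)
Your proposal follows essentially the same route as the paper: the barrier $Ce^{-\beta t/\vr}$ with the maximum principle for \eqref{initial-layer-bounds}, stretched variables $\zeta=x/\sqrt{\vr}$, $\eta=t/\vr$ plus the interior estimates of \cite{ladyz} for \eqref{initial-layer-bounds-der}, and differentiating the equation in $x$ and repeating the argument for \eqref{initial-layer-bounds-der-improved} (the paper formulates problems only for $(w^*_I)_x$ and $(w^*_I)_{xx}$ and then reuses the stretched-variable estimates, rather than your $i$-fold recursion with a splitting, but this is the same mechanism). The only inaccuracy is the claim that $\partial_x^i w_I(x,0)$ is $O(1)$ for all $i\le 4$: because of the $\vr v_1$ contribution to $v$ it is only $O(1+\vr^{1-i/2})$ for $i=3,4$, which is exactly what the stated bound tolerates, so the argument is unaffected.
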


\begin{proof}
Note that for the initial condition $ \Vert w^*_I(x,0) \Vert \leq C$,  the boundary conditions $w^*_I(-p,t)=w^*_I(1+q,t)=0$ and
\[
L^*e^{-\frac{ \beta t}{\vr}} =(b^*(x,t) -\beta) e^{-\frac{ \beta t}{\vr}} \geq 0.
\]
The bound (\ref{initial-layer-bounds}) follows from the maximum principle.

To deduce bounds on the derivatives of $w_I$, we repeat the argument from \cite{ria}. Transforming to  the stretched variables $\zeta = x/\sqrt{\ve}, \eta = t/\ve $ we have
\begin{eqnarray*}
\left( \frac{\partial }{\partial \eta} - \frac{\partial ^2}{\partial \zeta  ^2} + \tilde b I\right)(\tilde w^*_I)=0, \quad  \hbox{in } \left(-\frac{p}{\sqrt{\ve}},\frac{1+q}{\sqrt{\ve}} \right)\times \left(0,\frac{T}{\ve} \right], \\
\left \vert \frac{\partial ^i \tilde w^*_I}{\partial \zeta  ^i} (\zeta,0) \right \vert \leq C, \qquad 0 \leq i \leq 4.
\end{eqnarray*}
From the interior Schauder estimates \cite[\S4.10]{ladyz}, we have that for any neighbourhood $\tilde N_\delta \in \left(-\frac{p}{\sqrt{\ve}},\frac{1+q}{\sqrt{\ve}}\right) \times \left(1,T/\ve \right)$
\[
\left \Vert \frac{\partial ^{i+j} \tilde w^*_I}{\partial \eta ^{j}  \partial \zeta  ^i}  \right \Vert _{\tilde N_\delta} \leq C\Bigl \Vert \tilde w^*_I\Bigr\Vert _{\tilde N_{2\delta}} \leq Ce^{-\beta (\eta -2\delta)} \leq Ce^{-\beta \eta};\quad 1 \leq i+2j \leq 4;
\]
and in the initial layer  region $(-\frac{p}{\sqrt{\ve}},\frac{1+q}{\sqrt{\ve}}) \times (0,1]$, simply use
\[
\Bigl \Vert \frac{\partial ^{i+j} \tilde w^*_I}{\partial \eta ^{j}  \partial \zeta  ^i}  \Bigr\Vert _{\tilde N_\delta} \leq C \leq Ce^{-\beta \eta},\qquad \hbox{as} \quad  \eta \leq 1.
\]
 After transforming back to the original variables, we  have thus established the pointwise bounds  (\ref{initial-layer-bounds-der}) on the partial derivatives of $w_I$.

To establish sharper bounds on the space derivatives of $w_I$, we differentiate the differential equation (\ref{initial-layer})  with respect to  the space variable and formulate  parabolic problems for $(w^*_I)_{x}$ and $(w^*_I)_{xx}$.
\begin{eqnarray*}
L^* \bigl[ (w^*_I)_{x}\bigr] = -b^*_{x}w^*_I , \hbox{  in }   Q^*_B ;
  \Vert (w^*_I)_{x}(x,0) \Vert \leq C, \ -p < x < 1+q.
\end{eqnarray*}
The extensions can be constructed so that $\Vert (w^*_I)_{x}(-p,t) \Vert \leq C, \Vert (w^*_I)_{x}(1+q,t)\Vert \leq C$.
Repeating the above argument with the maximum principle and the stretched variables, one can deduce that
$$
 \left\vert \frac{\partial ^{i+j}}{\partial x^{i}\partial t ^{j}} \left (\frac{\partial w^*_I}{\partial x} \right) (x,t) \right\vert  \leq C \left(1+\vr ^{-(i/2 +j)}\right)e^{-\frac{\beta t}{\vr}},\ 0 \leq i+2j \leq 2.
$$
Note further,
\begin{eqnarray*}
&L^* \bigl[ (w^*_I)_{xx}\bigr] = -b^*_{xx}w^*_I -2b^*_{x}(w^*_I)_{x}, \hbox{  in }   Q^*_B \\
& (w^*_I)_{xx}(-p,t)=0, \quad (w^*_I)_{xx}(1+q,t)=0,  \ t \geq 0 \\
&  (w^*_I)_{xx}(x,0)=((y-v^*)^*(x,0))_{xx}, \ -p < x < 1+q.
\end{eqnarray*}
Repeating the argument, one can deduce that
$$
 \left\vert \frac{\partial ^{i+j}}{\partial x^{i}\partial t ^{j}} \left (\frac{\partial^2 w^*_I}{\partial x^2} \right)  (x,t) \right\vert  \leq C \left(1+\vr ^{-(i/2 +j)}\right)e^{-\frac{\beta t}{\vr}},\ 0 \leq i+2j \leq 2,
$$
which complete the proof.
\end{proof}

We consider now the initial-boundary  layer component $w_{IB}$ defined in~\eqref{initial-boundary-layera}-\eqref{initial-boundary-layerc}. As $y$ is continuous and the components $v,w_L,w_R, w_I$ are smooth, zero-order  compatibility conditions (for $w_{IB}$) are satisfied. We further decompose the initial-boundary layer term  via
\[
w_{IB} (x,t)= A_1z_1 (x,t)+ w_C(x,t),
\]
where the constant $A_1$ is specified in (\ref{first}) and the function $z_1$ is defined in (\ref{zn}). 
Note that $z_1 \not \in C^{2+\gamma}(\bar Q) $.

\begin{theorem}\label{thm-3}
The initial-boundary layer component $w_{C} \in  C^{2+\gamma}(\bar Q) $. For all $(x,t)\in \bar Q$
\begin{subequations}
\begin{equation}
\vert w_{C}(x,t) \vert \leq Ce^{-\frac{ \beta t}{\vr}}; \end{equation}
and
\begin{eqnarray}
\left \vert \frac{\partial ^{2}w_{C}}{\partial x^{2}}  (x,t) \right\vert  + \left\vert \frac{\partial w_{C}}{\partial t }  (x,t) \right\vert \leq
 C +C\vr ^{-1}\left(e^{-\sqrt{\frac{\beta}{\vr}}x} +  e^{-\sqrt{\frac{\beta}{\vr}}(1-x)}+ e^{-\frac{\beta t}{\ve}}\right)  \\
\left\vert \frac{\partial ^{4}w_{C}}{\partial x^{4}}  (x,t) \right\vert  + \left\vert \frac{\partial ^2w_{C}}{\partial t ^2}  (x,t) \right\vert \leq
 C \vr ^{-1}+C\vr ^{-2}\left(e^{-\sqrt{\frac{\beta}{\vr}}x} +  e^{-\sqrt{\frac{\beta}{\vr}}(1-x)}+ e^{-\frac{\beta t}{\ve}}\right) \label{corner-layer-bounds}
\end{eqnarray}
\end{subequations}
\end{theorem}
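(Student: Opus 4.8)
The plan is to treat $w_C$ as the solution of a singularly perturbed problem with now \emph{continuous} data and to estimate it by the techniques already used for the components of $y$, while carrying the forcing term along. First I would write down the problem for $w_C=w_{IB}-A_1z_1$. Since $L_0z_0=L_0z_1=0$, one has $Lz_0=(L-L_0)z_0=(b(x,t)-b(0,0))z_0$ and similarly for $z_1$, so
\[
Lw_C=-\bigl(b(x,t)-b(0,0)\bigr)\bigl(A_0z_0(x,t)+A_1z_1(x,t)\bigr)=:R(x,t)\quad\hbox{in }Q,
\]
with $w_C(0,t)=-w_I^*(0,t)-A_1te^{-b(0,0)t/\vr}$, $w_C(1,t)=-w_I^*(1,t)-A_1z_1(1,t)$ and $w_C(x,0)=-w_L^*(x,0)-w_R^*(x,0)$. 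Using $b_x(0,0)=0$ and the Appendix~2 bounds on $z_0,z_1$ (with $b(0,0)>\beta$), we get $|R|\le C(x^2+t)(|z_0|+|z_1|)\le C(x^2+t)e^{-\beta t/\vr}\le Ce^{-\beta t/\vr}$; the factor $b-b(0,0)=O(x^2+t)$ smooths the corner behaviour of $z_0$ enough that $R\in C^{\gamma}(\bar Q)$. The constant $A_1$ of \eqref{first} is chosen so that the first-order corner compatibility condition for $w_C$ holds at $(0,0)$ (only the second-order one failing there), while at $(1,0)$ compatibility of $w_C$ to the required order follows from \eqref{First-R}, \eqref{Second-R} and the smoothness of $v,w_L,w_R,w_I$. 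Parabolic regularity theory then gives $w_C\in C^{2+\gamma}(\bar Q)$; the failure of second-order compatibility at $(0,0)$ is exactly why $w_C\notin C^{4+\gamma}$ and why the fourth-order bounds below must deteriorate as $\vr\to0$.

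Next I would establish the pointwise bound by the maximum principle. With the barrier $\Psi=C_1e^{-\beta t/\vr}$ we have $L\Psi=C_1(b-\beta)e^{-\beta t/\vr}$; since $b-\beta$ is bounded below by a positive constant on $\bar Q$ and $|R|\le Ce^{-\beta t/\vr}$, we may take $C_1$ so that $L\Psi\ge|R|$ on $Q$. On the parabolic boundary, $|w_C(x,0)|\le C$ by Theorem~\ref{th:vwLRbounds}, Theorem~2 gives $|w_I^*(0,t)|,|w_I^*(1,t)|\le Ce^{-\beta t/\vr}$, and Appendix~2 gives $|A_1te^{-b(0,0)t/\vr}|,|A_1z_1(1,t)|\le Ce^{-\beta t/\vr}$ (using $b(0,0)>\beta$); hence $\Psi\pm w_C\ge 0$ on $\partial Q$ for $C_1$ large, and the maximum principle yields $|w_C|\le Ce^{-\beta t/\vr}$ on $\bar Q$.

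The derivative bounds are the heart of the argument, and for them I would decompose $w_C$ once more in the manner of \eqref{decomp}. First peel off the residual corner singularity, $w_C=A_2z_2+\hat w$ with $z_2$ as in \eqref{zn} (its relevant derivative bounds coming from Appendix~2) and $\hat w\in C^{4+\gamma}(\bar Q)$; then split $\hat w=\mathcal{V}+\mathcal{W}_L+\mathcal{W}_R+\mathcal{W}_I$ over extended domains. The regular component $\mathcal{V}$ solves $L^*\mathcal{V}^*=R^*$ with boundary/initial values taken from the reduced solution $R/b$ and its $\vr$-correction, so that, by the stretched-variable bounds of \cite{ladyz} exactly as in Theorem~\ref{th:vwLRbounds}, $\vert\partial_x^i\partial_t^j\mathcal{V}\vert\le C(1+\vr^{1-(i/2+j)})$, which supplies the $C$ term at $i+2j=2$ and the $C\vr^{-1}$ term at $i+2j=4$. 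The boundary layers $\mathcal{W}_L,\mathcal{W}_R$ solve homogeneous problems with the boundary mismatches as data and, by the maximum principle with exponential barriers and the stretched-variable argument, satisfy $\vert\partial_x^i\partial_t^j\mathcal{W}_L\vert\le C\vr^{-(i/2+j)}e^{-\sqrt{\beta/\vr}\,x}$ and the symmetric bound for $\mathcal{W}_R$; the initial layer obeys $\vert\partial_x^i\partial_t^j\mathcal{W}_I\vert\le C\vr^{-(i/2+j)}e^{-\beta t/\vr}$ by the interior Schauder argument used for $w_I$ in Theorem~2. Summing these contributions at $i+2j=2$ and $i+2j=4$ reproduces the two displayed estimates.

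The main obstacle is that $R$ is not a smooth, slowly varying right-hand side: $z_0$ is discontinuous at $(0,0)$ and carries an $x$-layer of width $\sqrt t$ on top of the $t$-layer $e^{-\beta t/\vr}$, and $z_1$ behaves similarly. One must therefore check that, when $R$ and its derivatives enter the sub-problems for $\mathcal{V},\mathcal{W}_L,\mathcal{W}_R,\mathcal{W}_I$, the products $(b-b(0,0))\,\partial^k z_n$ — controlled by the sharp layer-type bounds of Appendix~2 and the vanishing of $b-b(0,0)$ at the corner — are dominated by the same $C(1+\vr^{1-(i/2+j)})$ or $C\vr^{-(i/2+j)}e^{-\ldots}$ expressions that govern the homogeneous components, so that the inhomogeneity does not worsen the final bounds. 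Alternatively one can bypass part of this decomposition by differentiating the equation for $w_C$ and running maximum-principle/barrier arguments for $(w_C)_x$, $(w_C)_{xx}$ and $(w_C)_t$ directly, as in the proofs of Theorems~\ref{th:vwLRbounds} and~2; in either route the delicate points are keeping track of which corner singularity ($z_1$, then $z_2$) has been subtracted and verifying the corner compatibility needed to put the residual in $C^{4+\gamma}$.
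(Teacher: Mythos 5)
Your overall strategy (peel off the corner singularity, then estimate a smoother remainder by maximum principle plus stretched-variable Schauder estimates) is in the same spirit as the paper, and your maximum-principle bound $\vert w_C\vert\le Ce^{-\beta t/\vr}$ is fine. But there is a genuine gap at the decisive step: you claim that $\hat w:=w_C-A_2z_2$ lies in $C^{4+\gamma}(\bar Q)$, and this is false in general for a variable coefficient $b(x,t)$. The paper states explicitly (Appendix 2) that $(y-A_1z_1-A_2z_2)\not\in C^{4+\gamma}(\bar Q)$; since $v,w_L,w_R,w_I\in C^{4+\gamma}(\bar Q)$, your $\hat w$ has exactly the same defect. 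The reason is not corner compatibility of the boundary data (which $A_1,A_2$ do fix) but the regularity of the forcing itself: $Lw_C=-(b-b(0,0))(A_0z_0+A_1z_1)$ expands as $\bigl(b_t(0,0)t+\tfrac{b_{xx}(0,0)}{2}x^2+b_{xt}(0,0)xt+\tfrac{b_{xxx}(0,0)}{6}x^3\bigr)z_0+\cdots$, and terms such as $tz_0$, $x^2z_0$ are only $C^{0+\gamma}$ — their second derivatives blow up like negative powers of $t$ near $(0,0)$ (the paper's $S_{i,j}=x^it^jz_0$ are in $C^{2+\gamma}$ only for $i+2j\ge 4$). Subtracting $A_2z_2$ does nothing to this interior forcing, so the fourth-order a priori estimates you invoke for your regular component $\mathcal{V}$ (and hence the bounds on $\partial_x^4\hat w$, $\partial_t^2\hat w$) are not available; the situation is aggravated by $A_1=O(\vr^{-1})$, $A_2=O(\vr^{-2})$, which you do not track.

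The paper closes this gap by subtracting, in addition to $A_2z_2$, the correction $A_0\Psi$ built from the functions $\chi_{i,j}$ with $L_0\chi_{i,j}=S_{i,j}$, chosen precisely so that $L_0\Psi$ cancels the non-smooth leading part of the forcing; only then is the remainder $R_C=w_C-A_2z_2-A_0\Psi=y_2-(v+w_L+w_R+w_I)$ in $C^{4+\gamma}(\bar Q)$, with $\vert\partial_x^i\partial_t^j(LR_C)\vert\le C\vr^{-j}e^{-\beta t/\vr}$ by \eqref{star1}, after which the maximum principle and the stretched-variable argument give the stated bounds, the theorem following from the separate Appendix 2 bounds on $z_2$ and $\Psi$. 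Your alternative of differentiating the equation for $w_C$ directly runs into the same obstruction, since the derivatives of $(b-b(0,0))z_0$ are unbounded near the corner; some analogue of the $\Psi$-correction (or an equally explicit treatment of the $S_{i,j}$ terms) is needed, and your proposal acknowledges this delicacy but does not supply it.
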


\begin{proof}
From  Appendix 2, the initial-boundary layer component $w_{C}$
can be written in the form
\[
w_{C}(x,t)=(A_2 z_2+ A_0\Psi+R_C)(x,t),
\]
where the function $z_2$ is defined in (\ref{zn}) and  the other terms $A_2, \Psi(x,t) $  are defined in  Appendix 2.
From this construction
\begin{eqnarray*}
R_C &=& w_{C} - A_2 z_2- A_0\Psi = w_{IB} - A_1 z_1- A_2 z_2- A_0\Psi \\
&=& (u - A_0 z_0- A_1 z_1- A_2 z_2- A_0\Psi)-(v+w_L+w_R+w_I) \\
&=& y_2 -(v+w_L+w_R+w_I).
\end{eqnarray*}
 In Appendix 2, it is established that $y_2\in  C^{4+\gamma}(\bar Q)$, which implies that $R_C\in  C^{4+\gamma}(\bar Q)$.

The remainder $R_C(x,t)$ satisfies the problem
\begin{eqnarray*}
LR_C&=&Ly_2 -f\\
&=& O(t^2+x^4 +x^2t) A_0z_0 +O(t +x^2) A_2z_2 + O(t +x^2) A_1 z_1 \\
&&+ \ve ^{-1} A_0O(t +x^2)\bigl( O(x^2t+t^2) z_0 + O(xt^2+t^3) (z_0)_x +O(t^3) (z_0)_{xx}\\
&&+ O(t^4) (z_0)_{xxx}\bigr); \\
R_C(x,0) &=& -(w_L^*+w_R^*)(x,0), \quad 0< x< 1, \\ R_C(0,t) &=& - \bigl(w^*_{I} +A_1z_1+ A_2 z_2+ A_0\Psi \bigr)(0,t),  \ t >0, \\
R_C(1,t) &=&- \bigl(w^*_{I} +A_1z_1+ A_2 z_2+ A_0\Psi \bigr)(1,t), \ t >0 .
\end{eqnarray*}
 Although the functions $z_2,  \Psi \not \in C^{4+\gamma}{(\bar Q)}$, we still have  the necessary bounds on the higher derivatives of  these functions. See Appendix 2 for details.  It remains to bound the derivatives of $R_C$.

Using the bounds in (\ref{star1}) we have that
\[
\Bigl \vert \frac{\partial ^{i+2j} }{\partial x^i \partial t^j} (LR_C(x,t)) \Bigr \vert \leq C \ve ^{-j}e^{-\frac{\beta t}{\ve}}, \qquad 0 \leq i+2j \leq 2;
\]
and, from the previous two theorems and the fact that $R_C\in  C^{4+\gamma}(\bar Q)$, we deduce that
\begin{eqnarray*}
\left \vert \frac{\partial ^{j}}{\partial t ^{j}} R_C (0,t) \right \vert
+ \left\vert \frac{\partial ^{j}}{\partial t ^{j}} R_C (1,t) \right\vert
\leq C\ve ^{-j} e^{-\frac{\beta t}{\ve}}; \ 0\leq j \leq 2;
\\
\left\vert \frac{\partial ^{i}}{\partial x ^{i}} R_C (x,0) \right\vert \leq C\ve ^{-i/2} \left(e^{-\sqrt{\frac{\beta}{\vr}}x} +e^{-\sqrt{\frac{\beta}{\vr}}(1-x)}\right); \ 0\leq i \leq 4.
\end{eqnarray*}
 From the maximum principle  we then have that
\[
\vert R_C(x,t) \vert \leq C  e^{-\frac{\beta t}{\ve}}.
\]
Using the stretched variables $ x/\sqrt{\ve}, t/\ve$ and the argument from the proofs of the previous theorems, we can deduce that
\[
\Bigl \vert \frac{\partial ^{i+j} }{\partial x^i \partial t^j} R_C(x,t) \Bigr \vert \leq C \ve ^{-j}e^{-\frac{\beta t}{\ve}}, \qquad 0 \leq i+2j \leq 4.
\]
This completes the proof.
\end{proof}

\section{Numerical Method}

To accurately capture the layers in both space and time, we use a tensor product of two piecewise-uniform Shishkin meshes \cite{fhmos} $\bar Q^{N,M}:= \omega ^N_x \times \omega ^M_t$.
The space Shishkin mesh $\omega ^N_x:=\{x_i\}^N_{i=0}$ is fitted to the two boundary layers by splitting the space domain as follows:
\begin{subequations}\label{mesh_RD}
\begin{equation}
[0, \sigma] \cup [ \sigma ,1-\sigma] \cup [1-\sigma ,1].
\end{equation}
The $N$ space mesh points are distributed in the ratio $N/4:N/2:N/4$ across these three subintervals.
The transition point $ \sigma$ (in space) is taken to be
\begin{equation}
\sigma := \min \left \{ 0.25, 2\sqrt{\frac{\ve}{\beta}} \ln N \right \} .
\end{equation}
The  Shishkin mesh $\omega ^M_t:=\{t_j\}^M_{j=0}$ splits the time domain  into two subintervals $[0,\tau]\cup [\tau, 1]$ and the mesh points in time are distributed equally between these two subintervals.
The transition point $ \tau$ (in time) is taken to be
\begin{equation}
\tau := \min \left \{ 0.5, {\frac{\ve}{\beta}} \ln M \right \}.
\end{equation}
\end{subequations}
We confine our attention to the case where $\sigma < 0.25$ and $\tau < 0.5$.
For the  other case, where $\sigma =0.25$ or $\tau =0.5$,  a classical argument can be applied.  We denote by $Q^{N,M}:=\bar Q^{N,M} \cap Q$ and $\partial Q^{N,M}:= \bar Q^{N,M} \setminus Q^{N,M}$.

We  use a classical  finite difference operator on this mesh to produce the following numerical method:
\begin{subequations} \label{discrete-problem}
\begin{eqnarray}
L^{N,M} Y(x_i,t_j) =\Bigl(f-A_0(b-b(0,0))z_0\Bigr)(x_i,t_j), \qquad (x_i,t_j)\in Q^{N,M}, \\
Y(x_i,t_j)=y(x_i,t_j) , \qquad (x_i,t_j)\in \partial Q^{N,M}, \\
\hbox{where} \quad L^{N,M} Y(x_i,t_j) :=  (\ve D^-_t-\ve\delta^2_x +b(x_i,t_j) I)Y(x_i,t_j).
\end{eqnarray}
\end{subequations}
The finite difference operators are defined by
\begin{eqnarray*}
D^+_x Y(x_i,t _j):= D^-_x Y(x_{i+1},t _j), \quad D^-_x Y(x_i,t _j):=\frac{Y(x_{i},t _j) - Y(x_{i-1},t_{j})}{h_i}, \\
D^-_t Y(x_i,t _j) := \frac{Y(x_i,t _j) - Y(x_{i},t_{j-1})}{k_j }, \quad \delta ^2 _x Y(x_i,t_j):= \frac{(D^+_x - D^-_x)Y(x_{i},t _j)}{\hbar_i},
\end{eqnarray*}
and the mesh steps are $h_i:=x_{i}-x_{i-1}, \, \hbar_i =(h_{i+1}+h_i)/2,\, k_j:= t_j - t_{j-1}$.

We prove below in Theorem~\ref{th:main} that the scheme~\eqref{discrete-problem} is uniformly convergent using a truncation error argument. It is well known that the scheme~\eqref{discrete-problem} satisfies a discrete maximum principle and it is used to derive error estimates from appropriate truncation error estimates.
We recall that the discrete maximum principle establishes that if $Z$ is a grid function that satisfies
$$
L^{N,M} Z\ge 0 \hbox{ on } Q^{N,M} \hbox{ and } Z\ge 0 \hbox{ on } \partial Q^{N,M}, \hbox{ then } Z\ge 0 \hbox{ on } \bar Q^{N,M}.
$$
We now describe how the truncation error estimates are deduced. Away from the transition points, the mesh is uniform and a classical  truncation error argument yields the  bound
\[
\left \vert L^{N,M} (Y-y) (x_i,t_j) \right \vert \leq C h_i^2 \ve \Vert y_{xxxx} \Vert + Ck_j\ve \Vert y_{tt} \Vert, \quad x_i \neq \sigma,1-\sigma.
\]
In addition, the discrete solution  can be  decomposed along the same lines as the continuous solution. That is,
\[
Y= V+W_L+W_R+W_{IB}+ W_I,
\]
where these discrete functions are defined by
$$
L^{N,M} V = Lv(x_i,t_j), \quad
  L^{N,M} W_{L,R,IB,I}(x_i,t_j)=Lw_{L,R,IB,I}(x_i,t_j)
$$
and on the boundary
\[
 V(x_i,t_j)=v(x_i,t_j),\  W_{L,R,IB,I}(x_i,t_j) =w_{L,R,IB,I}(x_i,t_j), \ (x_i,t_j)\in  \partial Q^{N,M}.
\]

\begin{theorem} \label{th:main}
 Let be  $Y$ the solution of the finite difference scheme~\eqref{discrete-problem} and $y$ the solution of the problem~\eqref{eq:ComponentY2}. Then,  the following nodal error estimates are satisfied
\begin{equation} \label{eq:NodalError}
 \Vert y-Y \Vert _{\bar Q^{N,M}} \leq  C\left( N^{-2} (\max \{ \ln^2 N , \ln M \} ) + M^{-1} \ln^2 M \right).
\end{equation}
\end{theorem}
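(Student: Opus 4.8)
### Proof proposal

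The plan is to split the nodal error according to the solution decomposition $y = v + w_L + w_R + w_I + w_{IB}$ (with $w_{IB} = A_1 z_1 + w_C$, and recalling that the discrete components are defined by applying $L^{N,M}$ to the corresponding continuous components) and to bound the error in each piece separately by a truncation-error argument combined with the discrete maximum principle and carefully chosen barrier functions. For the regular component $v$ I would use the classical consistency estimate $|L^{N,M}(V-v)| \le C\ve(h_i^2\|v_{xxxx}\| + k_j\|v_{tt}\|)$ together with the bounds \eqref{vbounds}, which give $\ve\|v_{xxxx}\|\le C$ and $\ve\|v_{tt}\|\le C$; since the mesh widths satisfy $h_i \le CN^{-1}$ and $k_j \le CM^{-1}$ globally, this yields an $O(N^{-2}+M^{-1})$ contribution — in fact better than what is claimed, so $v$ is never the bottleneck.

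For the boundary layers $w_L, w_R$ I would argue on the fine and coarse parts of the space mesh separately. On the fine mesh near $x=0$ (width $h=4\sigma/N = O(N^{-1}\sqrt{\ve}\ln N)$) the consistency bound $C\ve h^2\|(w_L)_{xxxx}\| \le C\ve h^2 \ve^{-2}$ together with $h^2 = C\ve\beta^{-1}N^{-2}\ln^2 N$ gives $O(N^{-2}\ln^2 N)$; the $k_j$-term uses $\|(w_L)_{tt}\|\le C\ve^{-1}$ from \eqref{bnds-time.dervs}, so $C\ve k_j\|(w_L)_{tt}\| \le Ck_j$, and on the fine time mesh $k_j = C\ve\beta^{-1}M^{-1}\ln M$, while on the coarse time mesh one uses the exponential decay $e^{-\beta t/\ve}$ of $w_L$ at the nodes to absorb the coarse step — this is the standard Shishkin estimate and yields $O(M^{-1}\ln M)$ or better. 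On the coarse part of the space mesh ($x \ge \sigma$), $w_L$ is exponentially small, $|w_L| \le CN^{-2}$ by the choice of $\sigma$; here one builds a discrete barrier (e.g. a product of the standard one-dimensional discrete layer function in $x$ with a suitable bound in $t$) to transfer the smallness of the boundary data at $x_{N/4}=\sigma$ and at $t_j$ to the interior, obtaining $O(N^{-2})$. The transition-point rows $x_i = \sigma, 1-\sigma$ are handled, as usual, not by a pointwise truncation bound but by incorporating them into the barrier argument. The initial layer $w_I$ is treated symmetrically in the time variable, using \eqref{initial-layer-bounds-der} and the sharper space estimates \eqref{initial-layer-bounds-der-improved}: on the fine time mesh one gets $C\ve k_j\|(w_I)_{tt}\| \le Ck_j\ve^{-1}\cdot\ve = Ck_j \le CM^{-1}\ln M$ — wait, more carefully $\ve\|(w_I)_{tt}\|\le C\ve^{-1}$, so $C\ve k_j \ve^{-1}$ with $k_j = C\ve M^{-1}\ln M$ gives $O(M^{-1}\ln M)$; the space consistency term uses \eqref{initial-layer-bounds-der-improved} to get $\ve\|(w_I)_{xxxx}\| \le C$, hence $O(N^{-2})$; on the coarse time mesh the factor $e^{-\beta t/\ve}$ at the nodes gives $O(M^{-1}\ln^2 M)$ (or better) via a barrier.

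The genuinely delicate piece is the initial-boundary (corner) layer $w_{IB} = A_1 z_1 + w_C$. The term $A_1 z_1$ is \emph{not} in $C^{2+\gamma}(\bar Q)$, so its discrete error must be estimated directly using the explicit analytic formulae and derivative bounds for $z_1$ collected in Appendix 2 (this is precisely why Appendix 2 and the properties \eqref{star1} are needed); one shows that the truncation error $L^{N,M}(Z_1 - z_1)$, though $z_1$ has unbounded higher derivatives near the corner, still satisfies bounds that, when integrated against the barrier, produce only $O(N^{-2}\ln^2 N + M^{-1}\ln^2 M)$ — the logarithmic losses and the interplay $\max\{\ln^2 N,\ln M\}$ in \eqref{eq:NodalError} originate here. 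For $w_C \in C^{2+\gamma}(\bar Q)$ one has only $C^{2+\gamma}$ regularity, so instead of a fourth-derivative consistency bound one uses the lower-order truncation estimate together with the pointwise bounds of Theorem~\ref{thm-3}: the right-hand sides of those bounds are sums of a term $O(1)$ (resp. $O(\ve^{-1})$) and layer terms $\ve^{-1}e^{-\sqrt{\beta/\ve}x}$, $\ve^{-1}e^{-\sqrt{\beta/\ve}(1-x)}$, $\ve^{-1}e^{-\beta t/\ve}$, so the analysis splits once more into the $O(1)$-smooth contribution (giving $O(N^{-2}+M^{-1})$) and three layer-type contributions, each handled by the fine-mesh/coarse-mesh + barrier dichotomy already used for $w_L$, $w_R$, $w_I$. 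Assembling all the pieces and taking the worst bound gives \eqref{eq:NodalError}.

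The main obstacle, and the step I would expect to take the most care, is the estimate for $A_1 z_1$ on the fine mesh near the corner $(0,0)$: because $z_1 \notin C^{2+\gamma}(\bar Q)$ one cannot simply quote a standard consistency result, and one must show that the singular behaviour of $z_1$ and its derivatives — whose precise parameter-explicit form is supplied by Appendix 2 — is weak enough (essentially logarithmic) that summing the local truncation errors against the discrete Green's function / barrier still yields the claimed almost-first-order bound, with the $\max\{\ln^2N,\ln M\}$ factor being the honest cost of this singularity.
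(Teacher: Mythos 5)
Your overall strategy coincides with the paper's: the same decomposition $y=v+w_L+w_R+w_I+w_{IB}$ with $w_{IB}=A_1z_1+w_C$, classical truncation bounds plus the discrete maximum principle and barrier functions for $v$, $w_L$, $w_R$, $w_I$, the fine/coarse and inside/outside-the-layer splittings, and the use of Theorem~\ref{thm-3} and the exponential decay in $t$ for $w_C$ (one small difference: the paper defines the discrete corner components $W_C,Z_1$ through the constant-coefficient operator $L_0^{N,M}$ with zero right-hand side, so that the consistency error for $z_1$ is exactly $(L_0^{N,M}-L_0)z_1$, and it does use the pointwise fourth-order derivative bounds of Theorem~\ref{thm-3} in the fine region even though $w_C\notin C^{4+\gamma}(\bar Q)$).

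However, at the step you yourself single out as the crux --- the error $E^j_i=A_1(Z_1-z_1)(x_i,t_j)$ --- your proposal is a placeholder rather than an argument, and the mechanism it gestures at (barrier/maximum-principle absorption of the truncation error) would not by itself succeed: at the first time level the truncation error of $z_1$ is only $O(1)$, so the usual stability estimate $\Vert E\Vert\le C\max_{i,j}{\cal T}_{i,j}$ gives nothing. The paper's actual device (following Zhemukhov) is a level-by-level recursion: writing $-\ve\delta^2_xE^j_i+(b+\ve/k)E^j_i={\cal T}_{i,j}+(\ve/k)E^{j-1}_i$ and iterating yields $\vert E^j_i\vert\le C\frac{k}{\ve}\sum_{n\le j}{\cal T}_{i,n}$, so the $O(1)$ first-level error is damped by the factor $k/\ve\le CM^{-1}\ln M$ on the fine time mesh. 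One then needs the specific facts $\ve\vert A_1\vert\le C$, ${\cal T}_{i,j}\le C(h^2(1+\ve/t_j)+k/t_{j-1})$ in the fine space region, ${\cal T}_{i,j}\le C(k/t_{j-1}+N^{-2})$ on $[\sigma,1-\sigma]$ via the $\erfc$ bound $z_0(x_{i-1},t_j)\le CN^{-2}$, and the harmonic sum $\sum_n k/t_{n-1}\sim\ln M$, carried out only up to $j\le M/2$ (for $t_j\ge\tau$ exponential smallness of $z_1$ and $Z_1$ suffices). This is also why your diagnosis of the singularity as ``essentially logarithmic'' is off: the relevant derivatives of $z_1$ blow up like $t^{-1}$, and the logarithm appears only through the accumulated harmonic sum over time levels, producing the $M^{-1}\ln^2M$ and $N^{-2}\ln M$ terms; the $N^{-2}\ln^2N$ part of the stated bound comes from the other (standard Shishkin) components, not from $z_1$.
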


\begin{proof}
From the estimates~\eqref{vbounds}, the truncation error for the regular  component is bounded by:
\begin{subequations}\label{TruncationV}
\begin{align}
\bigl \vert L^{N,M} (V-v) (x_i,t_j) \bigl \vert & \leq Ch_i^2 \ve \Vert v_{xxxx} \Vert + Ck_j\ve \Vert v_{tt} \Vert \notag
\\ &\leq  C(N^{-1} \ln N)^2 +CM^{-1}\ln M , \, x_i \neq \sigma,1-\sigma;  \\
\bigl \vert L^{N,M} (V-v) (x_i,t_j) \bigl \vert &\leq C\sqrt{\ve} N^{-1} \ln N +CM^{-1}\ln M , \, x_i = \sigma,1-\sigma .
\end{align}
\end{subequations}

By employing a suitable  barrier function, described in \cite{ria}, we can deduce from the estimates~\eqref{TruncationV} and the discrete maximum principle
\begin{equation}\label{ErrorV}
 \Vert v-V\Vert _{\bar Q^{N,M}}  \le C(N^{-1} \ln N)^2 +CM^{-1}\ln M.
\end{equation}
Using the exponential bounds on the derivatives of the boundary layer components $w_L$ and $w_R$ given in Theorem~\ref{th:vwLRbounds}  and the definition of the space Shishkin mesh $\omega ^N_x$, we bound its truncation errors (see \cite[Theorem 6]{ria} for details) as follows:
\begin{subequations}\label{TruncationWLR}
\begin{align}
\left \vert L^{N,M} (W_L-w_L)(x_i,t_j)  \right \vert & \leq C(N^{-1} \ln N)^2 + CM^{-1} \ln M, \\
\left \vert L^{N,M} (W_R-w_R)(x_i,t_j)  \right \vert & \leq C(N^{-1} \ln N)^2 + CM^{-1} \ln M,
\end{align}
\end{subequations}
for all $(x_i,t_j)\in Q^{N,M}$. The discrete maximum principle yields the bound
\begin{subequations}\label{ErrorWLR}
\begin{align}
 \Vert  W_L-w_L\Vert _{\bar Q^{N,M}}  & \leq C(N^{-1} \ln N)^2 + CM^{-1} \ln M, \\
 \Vert  W_R-w_R\Vert _{\bar Q^{N,M}}  & \leq C(N^{-1} \ln N)^2 + CM^{-1} \ln M.
\end{align}
\end{subequations}
In the case of the initial layer function, note that
\[
e^{-b(0,0)k_j/\ve} \leq \left(1+\frac{b(0,0)k_j}{\ve}\right)^{-1}
\]
and, using a discrete barrier function $B(t_j)$, we deduce that
\[
\vert W_I(x_i,t_j) \vert \leq \prod _{m=1}^j  \left(1+\frac{\beta k_m}{\ve}\right)^{-1} =: B(t_j),
\]
as
\[
\ve D^-_t B(t_j) +b(x_i,t_j) B(t_j)= (b(x_i,t_j)-\beta) B(t_j) \geq 0.
\]
This barrier function $B(t_j)$ and the estimates~\eqref{sub:initial-layer-bounds} are used to deduce bounds for truncation error associated with the component $w_I$. First,
 outside the initial layer, where $t_j \geq \tau,$ we have
\begin{equation} \label{ErrorWIa}
\vert W_I(x_i,t_j) - w_I(x_i,t_j) \vert \leq \vert W_I(x_i,t_j) \vert + \vert w_I(x_i,t_j) \vert \leq CM^{-1},
\end{equation}
and within the initial layer, where $t_j < \tau$,
\begin{subequations}\label{TruncationWI}
\begin{align}
&\bigl \vert L^{N,M} (W_I-w_I) (x_i,t_j)  \bigl \vert \notag \\
& \hspace{3cm} \leq C(N^{-1} \ln N)^2 + CM^{-1} \ln M, \ x_i \ne \sigma,1-\sigma, \label{TruncationWIb}\\
&\bigl \vert L^{N,M} (W_I-w_I) (x_i,t_j)  \bigl \vert \notag \\
& \hspace{3cm}  \leq C\sqrt{\ve} N^{-1} \ln N + CM^{-1} \ln M, \ x_i=\sigma,1-\sigma. \label{TruncationWIc}
\end{align}
\end{subequations}

As in the case of the continuous initial-boundary layer component $w_{IB}$, we introduce the secondary decomposition  \[
 W_{IB} = W_C+A_1 Z_1,
\] where the components $W_C, Z_1$ are defined as the solutions of
\begin{align*}
 &  L_0^{N,M} W_C:= (\ve D^-_t-\ve \delta^2_x +b(0,0))W_C =0, \ \hbox{on} \ Q^{N,M}  \quad \hbox{and} \quad W_C = w_C \ \hbox{on} \ \partial Q^{N,M}; \\
& L_0^{N,M} Z_1 =0, \ \hbox{on} \ Q^{N,M}  \quad \hbox{and} \quad Z_1 = z_1 \ \hbox{on} \ \partial Q^{N,M}.
\end{align*}
Note that, using a discrete maximum principle,
\[
 \vert W_C(x_i,t_j) \vert \leq C \prod _{m=1}^j  \left(1+\frac{\beta k_m}{\ve}\right)^{-1},  \quad
\vert Z_1(x_i,t_j) \vert \leq C \ve \prod _{m=1}^j  \left(1+\frac{\beta k_m}{\ve}\right)^{-1}.
\]

The error  $W_{IB} -w_{IB}$  is decomposed  into the sum \[
W_{IB} -w_{IB} = W_C -w_C+A_1 (Z_1-z_1),
\qquad \ve \vert A_1 \vert \leq C.
\]

From the earlier exponential bounds   on each of the four individual terms $W_C,w_C,Z_1,z_1$ we establish that for  $t_j \geq \tau$
\begin{eqnarray*} \label{ErrorWCa}
\vert W_{IB}(x_i,t_j) - w_{IB}(x_i,t_j) \vert &\leq&  \vert W_{C}(x_i,t_j) \vert + \vert w_{C}(x_i,t_j) \vert  +A_1(\vert Z_1(x_i,t_j) \vert + \vert z_1(x_i,t_j) \vert )\nonumber \\
&\leq&  CM^{-1}.
\end{eqnarray*}

Within the initial layer,  from Theorem~\ref{thm-3}, we have that for $t_j < \tau$,
\begin{subequations}\label{TruncationWC}
\begin{align}
&\bigl \vert L^{N,M} (W_{C}-w_{C}) (x_i,t_j)  \bigl \vert \notag \\
& \hspace{0.2cm}
\leq C(N^{-1} \ln N)^2 + CM^{-1} \ln M, \quad  x_i \notin [\sigma , 1- \sigma ], \\
&\bigl \vert L^{N,M} (W_{C}-w_{C}) (x_i,t_j)  \bigl \vert 
\leq C\sqrt{\ve}(N^{-1} \ln N) +C\frac{H}{\sqrt{\ve}} e^{-\frac{\beta (\sigma+H)}{\sqrt{\ve}}}+ CM^{-1} \ln M \notag \\
& \hspace{0.2cm} \le C\sqrt{\ve}(N^{-1} \ln N) +C(N^{-1}\ln N) ^2+ CM^{-1} \ln M, \, x_i = \sigma , 1- \sigma, \\
&\bigl \vert L^{N,M}(W_{C}-w_{C}) (x_i,t_j)  \bigl \vert \notag \\
& \hspace{0.2cm} \leq C(N^{-1} \ln N)^2 +C\frac{H^2}{\ve}  e^{-\frac{\beta (\sigma+H)}{\sqrt{\ve}}}+ CM^{-1} \ln M\notag\\
 & \hspace{0.2cm} \le C(N^{-1} \ln N) +C(N^{-1}\ln N) ^2+ CM^{-1} \ln M, \, x_i \in (\sigma , 1- \sigma ),
\end{align}
where $H$ is the mesh width in the coarse region, i.e., $H=2(1-2\sigma)/N=O(N^{-1})$.
\end{subequations}

Collecting the truncation error bounds \eqref{TruncationWI} and~\eqref{TruncationWC} we have for $t_j <\tau$
\begin{align*}
&\bigl \vert L^{N,M} ((W_I+W_{C})-(w_I+w_{C})) (x_i,t_j) \bigl \vert  \\
&\hspace{2cm} \leq  C(N^{-1} \ln N)^2 +CM^{-1}\ln M , \ x_i \neq \sigma,1-\sigma,  \\
&\bigl \vert L^{N,M} ((W_{I}+W_{C})-(w_I+w_{C})) (x_i,t_j) \bigl \vert \\
&\hspace{2cm} \leq C\sqrt{\ve}(N^{-1} \ln N) +CM^{-1}\ln M , \ x_i = \sigma,1-\sigma,
\end{align*}
and
$
\vert (W_I+W_{C})-(w_I+w_{C})) (x_i,\tau) \vert \le C M^{-1}.
$
Hence, the truncation error is first order only along the spatial transition lines $x_i=\sigma,1-\sigma$.
By employing again a suitable  barrier function, described in \cite{ria}, we can deduce for $(x_i,t_j)\in [0,1]\times[0,\tau]$
\begin{equation} \label{ErrorWICb}
\vert (W_I+w_{C})-(w_I+w_{C})) (x_i,\tau) \vert \le  C(N^{-1} \ln N)^2 +CM^{-1}\ln M.
\end{equation}
We employ a different  argument to bound the error \[
 E^j_i :=  A_1 (Z_1-z_1)(x_i,t_j), \quad \hbox{for} \quad  1\leq j \leq M/2. 
 \]
From   Appendix 2, $\ve\vert  A_1 \vert \leq C $ and below we bound the truncation
error
\[
 {\cal T}_{i,j}  :=\vert ( L^{N,M}_0 - L_0)  A_1 z_1 (x_i,t_j) \vert
\]
using the bounds on the derivatives of $z_1$ given in Appendix 2.
The truncation error at the first time level $t=t_1$ is thus bounded as follows
\begin{eqnarray*}
{\cal T}_{i,1} \leq C\left  \Vert \frac{\partial ^{2}z_1}{\partial x^{2}}   \right\Vert _{X^1_i}+  C  \left\Vert \frac{\partial  z_1}{\partial t }  \right\Vert _{T^1_i} \leq C
\end{eqnarray*}
where $X^j_i: = [x_{i-1},x_{i+1}]\times \{ t_j \}  , \, T^j_i: =\{ x_i \} \times  [t_{j-1},t_j]$.
For the other time levels $2 \leq j \leq M/2$, we have for $x_i \in (0,\sigma) \cup (1-\sigma ,1)$
\begin{align*}
{\cal T}_{i,j} & \leq C  \left( h^2\left  \Vert \frac{\partial ^{4}z_1}{\partial x^{4}}   \right\Vert _{X^j_i} +   k \left\Vert \frac{\partial ^2 z_1 }{\partial t ^2}  \right\Vert _{T^j_i} \right)
\leq C\left(  h^2 \left( 1 +\frac{\ve }{t_{j}} \right)  +\frac{k}{t_{j-1}} \right)
\end{align*}
and for $x_i \in [\sigma , 1-\sigma ]$
\begin{eqnarray*}
{\cal T}_{i,j} \leq C  \left( \left  \Vert \frac{\partial ^{2}z_1}{\partial x^{2}}   \right\Vert _{X^j_i} +   k \left\Vert \frac{\partial ^2 z_1 }{\partial t ^2}  \right\Vert _{T^j_i} \right)
\leq C \left( z_0(x_{i-1},t_j) + \frac{k}{t_{j-1}}\right).
\end{eqnarray*}
As in  \cite[pg. 916]{Zhemukhov1} and also using $(x- 2t\sqrt{b(0,0)/\ve})^2 \geq 0$, we have
\begin{eqnarray*}
\erfc(z) \leq \frac{e^{-z^2}}{z+\sqrt{z^2 +4/\pi}} \leq Ce^{-z^2}, \ z \geq 0, \quad
e^{-\frac{x^2}{4t}} \leq e^{\frac{b(0,0)t}{\ve}} e^{-\sqrt{\frac{b(0,0)}{\ve}} x};
\end{eqnarray*}
which yield, for all $x_i \geq \sigma$,
\[
z_0(x_{i-1},t_j) \leq Ce^{-\sqrt{\frac{b(0,0)}{\ve}} x_{i-1}}\leq Ce^{\sqrt{\frac{b(0,0)}{\ve}} h}e^{-\sqrt{\frac{b(0,0)}{\ve}} \sigma} \leq CN^{-2}.
\]
Thus, for $x_i \in [\sigma , 1-\sigma ]$, we have the truncation error bound
\[
{\cal T}_{i,j}
\leq C\left(  \frac{k}{t_{j-1}}+  N^{-2}  \right).
\]
We again follow the argument in \cite{Zhemukhov1} and note that at each time level, $t=t_j, \ 1 \leq j \leq M/2$,
\[
-\ve \delta ^2_x  E^j_i + \left(b(x_i,t_j) +\frac{\ve}{k}\right)  E^j_i =  {\cal T}_{i,j} + \frac{\ve}{k}  E^{j-1}_i.
\]
From this we can deduce that
\begin{eqnarray} \label{ErrorZ}
\vert  E^j_i \vert &\leq& C \frac{k}{\ve} \left( {\cal T}_{i,1} +\sum _{n=2}^j  {\cal T}_{i,n} \right) \nonumber\\
&\leq& CM^{-1}  (\ln M ) +C\frac{k}{\ve}M  N^{-2} +  C \left(\frac{k}{\ve} +h^2\right) \int _{s=1}^{M/2} \frac{ds}{s} \nonumber\\
&\leq&   C(M^{-1} \ln  M +N^{-2})\ln  M.
\end{eqnarray}
The error estimates~\eqref{ErrorV}, \eqref{ErrorWLR}, \eqref{ErrorWIa}, \eqref{ErrorWCa},  \eqref{ErrorWICb} and \eqref{ErrorZ} prove the nodal error bound~\eqref{eq:NodalError} and the result follows.
\end{proof}

 One can extend the nodal error estimate~\eqref{eq:NodalError} to a global error estimate by applying the argument in \cite[pp. 56-57]{fhmos}. Note that, in general,  we use a bound on the second space and time derivative (of each component) to establish this global error bound. However, in the case of the terms  $w_{IB}, w_{C}, z_1$, we use the alternative interpolation bound over each rectangle $Q_{i,j} : = (x_i, x_{i+1}) \times (t_j, t_{j+1})$ of the form
\[
\Vert (z - \bar z)(x,t) \Vert _{Q_{i,j} } \leq C(t_{j+1}-t_j) \Vert z_t \Vert _{Q_{i,j}} + C (x_{i+1}-x_i)^2 \Vert z_{xx} \Vert _{Q_{i,j}}.
\]
Moreover, in the case of the term involving $z_1$, note that
\[
A_1 \vert (z_1)
_t (x,t) \vert \leq \frac{C}{\ve} e^{-\beta t/\ve}.
\]

\begin{corollary}
Let  $Y$ be the solution of the finite difference scheme~\eqref{discrete-problem} and $y$ the solution of the problem~\eqref{eq:ComponentY2}. Then,  the following global error estimates are satisfied
 \begin{equation} \label{eq:Global}
\Vert y-\bar Y\Vert \leq  C\left( N^{-2} (\max \{ \ln^2 N , \ln M \} ) + M^{-1} \ln^2 M \right),
\end{equation}
where $\bar Y$ denotes the bilinear interpolant of the discrete function $Y$ from the the values of the grid $\bar Q^{N,M}$ to the domain $\bar Q$.
\end{corollary}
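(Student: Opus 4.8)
The plan is to derive the global bound from the nodal bound of Theorem~\ref{th:main} by adding the interpolation error $\Vert \bar Y - \overline{y}\Vert$, where $\overline{y}$ is the bilinear interpolant of the \emph{exact} solution $y$, and then bounding $\Vert y - \overline{y}\Vert$ component by component using the decomposition $y = v + w_L + w_R + w_I + w_{IB}$. Since $\Vert y - \bar Y\Vert \leq \Vert y - \overline{y}\Vert + \Vert \overline{y} - \bar Y\Vert$ and the second term equals $\Vert \overline{y - Y}\Vert \leq \Vert y - Y\Vert_{\bar Q^{N,M}}$ (bilinear interpolation is stable in $L_\infty$ on a tensor-product mesh), Theorem~\ref{th:main} controls $\Vert \overline{y} - \bar Y\Vert$; it remains to show $\Vert y - \overline{y}\Vert$ also satisfies the bound \eqref{eq:Global}.

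For the regular component $v$ and the classical layer components $w_L, w_R, w_I$ one uses the standard bilinear interpolation estimate on each rectangle $Q_{i,j}$, namely $\Vert z - \bar z\Vert_{Q_{i,j}} \leq C(t_{j+1}-t_j)^2\Vert z_{tt}\Vert_{Q_{i,j}} + C(x_{i+1}-x_i)^2\Vert z_{xx}\Vert_{Q_{i,j}}$, together with the derivative bounds of Theorems~\ref{th:vwLRbounds} and~\ref{sub:initial-layer-bounds} and the geometry of the Shishkin mesh (mesh width $O(N^{-1}\ln N)$ inside the space layers and $O(M^{-1}\ln M)$ inside the initial layer, $O(N^{-1})$ resp. $O(M^{-1})$ elsewhere, with the exponential factor contributing at most an $O(1)$ constant on the coarse rectangles). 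This is exactly the argument of \cite[pp.~56--57]{fhmos}, and it yields the contribution $C((N^{-1}\ln N)^2 + M^{-1}\ln M)$. The delicate point is that $w_{IB}$, $w_C$ and $z_1$ have second space derivatives of size only $O(\ve^{-1})$ in the initial layer (and a genuine $\ve^{-1}$ blow-up near $t=0$), so the naive $O(N^{-1}\ln N)^2\Vert z_{xx}\Vert$ estimate is not small; here we switch to the mixed estimate $\Vert z - \bar z\Vert_{Q_{i,j}} \leq C(t_{j+1}-t_j)\Vert z_t\Vert_{Q_{i,j}} + C(x_{i+1}-x_i)^2\Vert z_{xx}\Vert_{Q_{i,j}}$, which is linear (not quadratic) in the time step, absorbing one power of $k_j \sim \ve M^{-1}\ln M$ against the $\ve^{-1}$. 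For $A_1 z_1$ one uses the stated bound $A_1|(z_1)_t(x,t)| \leq C\ve^{-1}e^{-\beta t/\ve}$ and $\ve|A_1|\leq C$: on the fine time mesh $k_j \sim \ve M^{-1}\ln M$, so $k_j \cdot \ve^{-1} e^{-\beta t_j/\ve} \leq C M^{-1}\ln M$, while the $(x_{i+1}-x_i)^2\Vert (z_1)_{xx}\Vert$ term is handled using the derivative bounds of Appendix~2 and the fact that $z_1$ also carries the boundary-layer factors $e^{-\sqrt{\beta/\ve}\,x}$, $e^{-\sqrt{\beta/\ve}(1-x)}$ which are resolved by the space Shishkin mesh; for $t_j \geq \tau$ the exponential $e^{-\beta t/\ve} \leq M^{-1}$ makes the whole term negligible. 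Treating $w_{IB} = w_C + A_1 z_1$ and $w_C = A_2 z_2 + A_0\Psi + R_C$ the same way (using the Appendix~2 bounds on $z_2,\Psi$ and the bound on $R_C$ from Theorem~\ref{thm-3}) gives the remaining contribution $C(N^{-1}\ln N)^2$ plus $CM^{-1}\ln^2 M$ once the $\ln M$ factors from summing $k_j/t_{j-1}$-type terms along the fine time mesh (as in \eqref{ErrorZ}) are accounted for.

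Combining the interpolation-error estimates for all components with the nodal bound gives \eqref{eq:Global}. The main obstacle is organizing the $w_{IB}$-group interpolation estimate so that the time-linear bound genuinely recovers the lost power of $\ve$ uniformly across the fine time layer $[0,\tau]$ (where $t_{j-1}$ can be as small as $O(\ve M^{-1}\ln M)$ and the $z_t$ bound behaves like $\ve/t$ rather than $\ve^{-1}$): this is precisely the summation already carried out for the nodal error of $E^j_i$ in \eqref{ErrorZ}, and the interpolation error is bounded by the same sum, so it contributes no worse than $C(M^{-1}\ln M + N^{-2})\ln M$. Everything else is the routine Shishkin-mesh bookkeeping of \cite[pp.~56--57]{fhmos}, and no new ideas beyond the two interpolation inequalities and the component bounds already established are needed.
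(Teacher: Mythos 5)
Your proposal is correct and follows essentially the same route as the paper: extend the nodal bound of Theorem~\ref{th:main} via the standard Shishkin-mesh interpolation argument of \cite[pp.~56--57]{fhmos} for $v,w_L,w_R,w_I$, and switch to the mixed bound $\Vert z-\bar z\Vert_{Q_{i,j}}\leq C(t_{j+1}-t_j)\Vert z_t\Vert_{Q_{i,j}}+C(x_{i+1}-x_i)^2\Vert z_{xx}\Vert_{Q_{i,j}}$ for the terms $w_{IB}$, $w_C$, $z_1$, using $A_1\vert (z_1)_t\vert\leq C\ve^{-1}e^{-\beta t/\ve}$ so that the fine time step $k_j=O(\ve M^{-1}\ln M)$ absorbs the $\ve^{-1}$. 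This is exactly the argument sketched in the paper preceding the corollary.
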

Note that it is the presence  of the fitted mesh (in both space and time) that yields global parameter-uniform convergence.

\section{Numerical results}

Consider the following sample problem from the problem class (\ref{Cproblem}):
\begin{subequations} \label{ex3}
\begin{align}
& b(x,t)= 1+x^2+t, \quad f(x,t)=e^{-x},  \\
& \phi(x)=1-x, \quad  g_L(t)= 0,  \ g_R(t)=-t^2,
\end{align}
\end{subequations}
and the domain is $Q=(0,1)\times(0,1]$. Observe that $1=\phi(0)\ne g_L(0)=0$ and the compatibility conditions  \eqref{First-R} and \eqref{Second-R} are not satisfied at $x=1,t=0$. This problem is a minor variant of a problem considered on the half line $x>0$ in \cite[\S 2]{han} to illustrate the interaction of initial and boundary layers.

The component $y$, which is defined in~\eqref{eq:ComponentY2}, is the solution of the problem
\begin{subequations}  \label{ex3Y}
\begin{align}
&Ly= e^{-x}+(b(x,t)-b(0,0))z_0(x,t) \quad (x,t) \in  Q,\\
&y(0,t)=z_0(0,t), \ y(1,t)=-t^2+z_0(1,t), \ t\ge 0, \\
& y(x,0)=1-x, \ 0<x<1,
\end{align}
\end{subequations}
where we recall that $z_0(x,t)= e^{-\frac{b(0,0) t}{\vr}}\erfc\left(\frac{x}{2\sqrt{t}} \right)$ and $y\in C^0(\partial Q)$. Nevertheless, the component $y$ for this example does not satisfy the first-order compatibility  condition~\eqref{first-x=0} at the corner $(0,0)$.

The exact solution of problem~\eqref{ex3Y} is unknown. In Figure~\ref{fig:U-bl}  the computed approximation $U$ (generated from the finite difference scheme~\eqref{discrete-problem}) to the solution $u$ of Example~\eqref{ex3} is displayed. The solution surface reveals that $u$ has initial and boundary layers. In all the figures of this section we consider the values of $\vr =2^{-12}$ and $N=M=64$.

 \begin{figure}[h!]
\centering
\resizebox{\linewidth}{!}{
	\begin{subfigure}[Entire domain $\bar Q$]{
		\includegraphics[scale=0.5, angle=0]{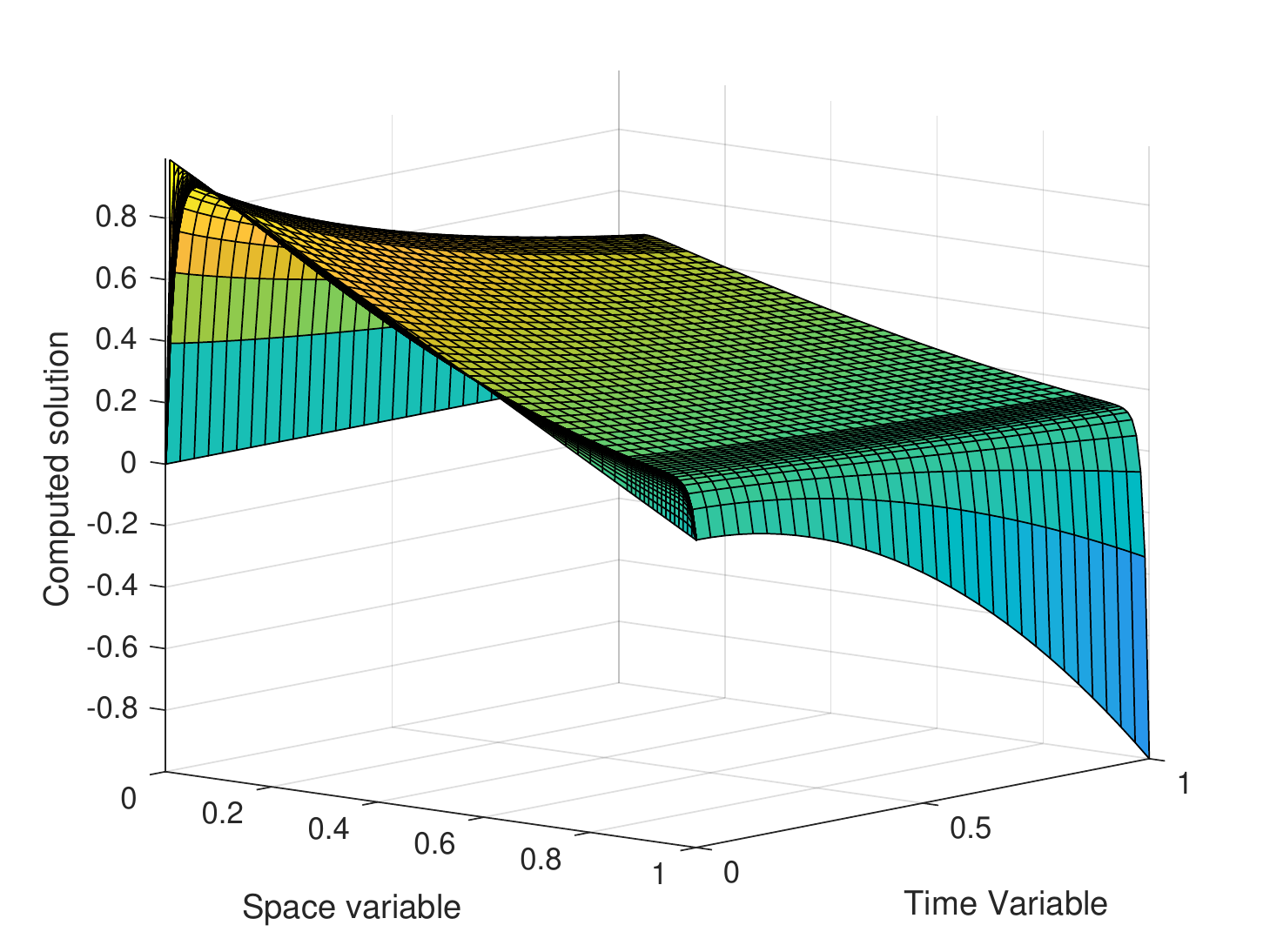}
		}
    \end{subfigure}
\begin{subfigure}[A zoom in on the corner $(0,0)$]{
		\includegraphics[scale=0.5, angle=0]{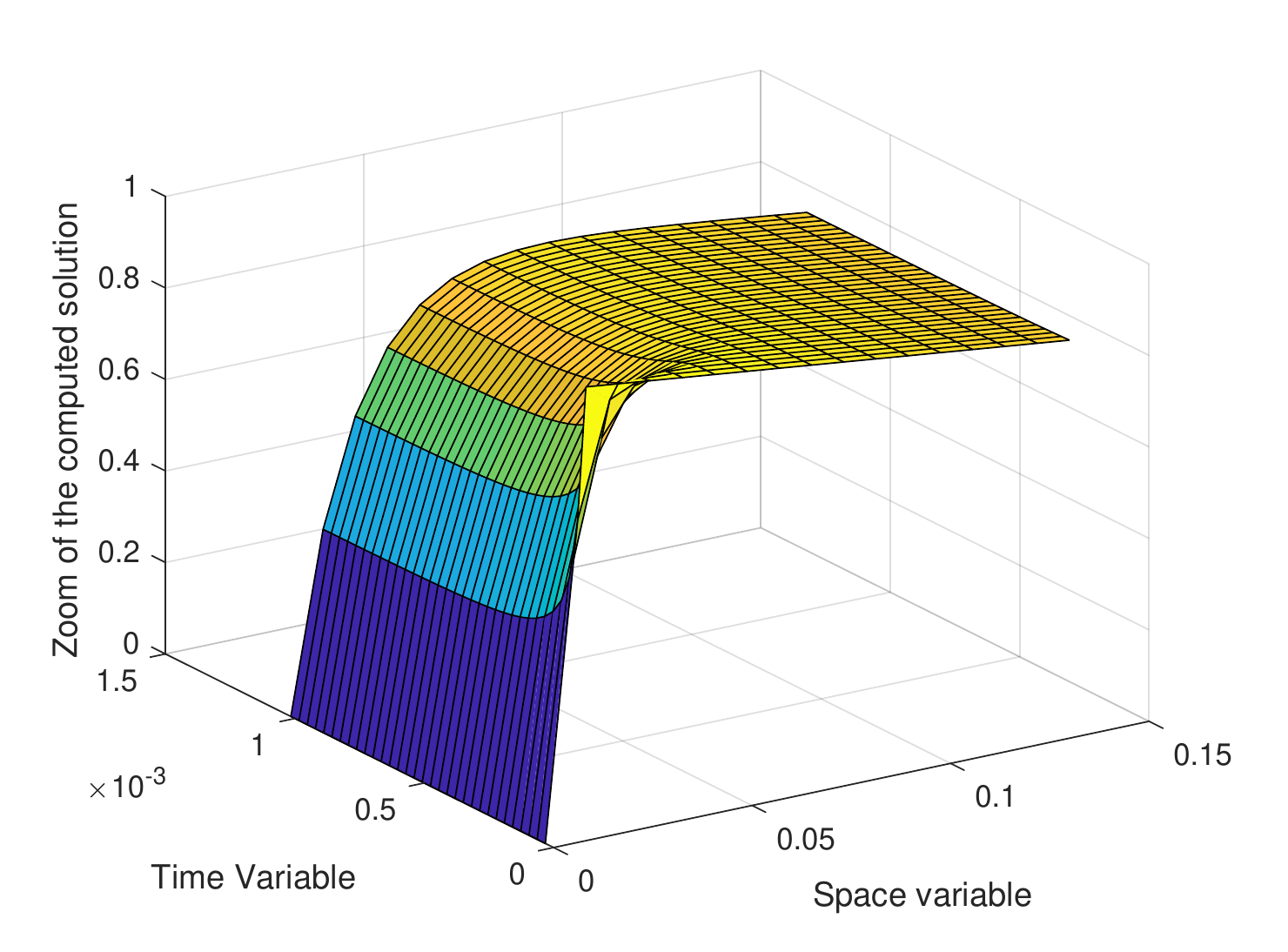}
		}
	\end{subfigure}
}
	\caption{Example~\eqref{ex3}: The numerical approximation to the solution $u$ with $\vr =2^{-12}$ and $N=M=64$}
	\label{fig:U-bl}
 \end{figure}

 The orders of convergence of the finite difference scheme~\eqref{discrete-problem} are estimated using the two-mesh principle~\cite{fhmos}. We denote by  $Y^{N,M}$ and $Y^{2N,2M}$ the computed solutions with~\eqref{discrete-problem} on the Shishkin meshes $Q^{N,M}$ and $Q^{2N,2M}$, respectively. These solutions are used to computed the maximum two-mesh global differences
$$
D^{N,M}_\ve:= \Vert \bar Y^{N,M}-\bar Y^{2N,2M}\Vert _{Q^{N,M} \cup Q^{2N,2M}}
$$
where $\bar Y^{N,M}$ and $\bar Y^{2N,2M}$ denote the bilinear interpolation of the discrete solutions $Y^{N,M}$ and $Y^{2N,2M}$ on the mesh $Q^{N,M} \cup Q^{2N,2M}$. Then, the orders of global  convergence $Q^{N,M}_\ve$ are estimated in a standard way \cite{fhmos}
$$
Q^{N,M}_\ve:=  \log_2\left (\frac{D^{N,M}_\ve}{D^{2N,2M}_\ve} \right).
$$
The uniform  two-mesh global differences $D^{N,M}$ and their corresponding uniform orders of global convergence $Q^{N,M}$ are calculated by
$$
D^{N,M}:= \max_{\ve \in S} D^{N,M}_\ve, \quad Q^{N,M}:=  \log_2\left ( \frac{D^{N,M}}{D^{2N,2M}} \right),
$$
where $S=\{2^0,2^{-1},\ldots,2^{-30}\}$. This is a sufficiently large set of values for the singular perturbation parameter $\vr$ to view the uniform convergence of the scheme~\eqref{discrete-problem}.

The maximum two-mesh global differences $D^{N,M}_\ve$ and the orders of global convergence $Q^{N,M}_\ve$ associated with the problem~\eqref{ex3Y} are displayed in Table~\ref{tb:ComponentYex3Global}.  The uniform two-mesh global differences $D^{N,M}$ and their orders of convergence $Q^{N,M}$ are given in the last row of this table. These numerical results are in line with  the error estimate~\eqref{eq:Global} showing that the method is an almost first-order uniformly global convergent scheme.

\begin{table}[h]
\caption{Example~\eqref{ex3}: Maximum and uniform two-mesh global differences and orders of global convergence associated with the solution $y$ of problem~\eqref{ex3Y}}
\begin{center}{\tiny \label{tb:ComponentYex3Global}
\begin{tabular}{|c||c|c|c|c|c|c|c|}
 \hline & N=64 & N=128 & N=256 & N=512 & N=1024 & N=2048 & N=4096\\
 \hline & M=16 & M=32  & M=64  & M=128 & M=256  & M=512  & M=1024\\
\hline \hline $\vr=2^{0}$
&3.287E-03 &1.822E-03 &9.570E-04 &4.893E-04 &2.467E-04 &1.237E-04 &6.192E-05 \\
&0.851&0.929&0.968&0.988&0.996&0.999&
\\ \hline $\vr=2^{-1}$
&5.854E-03 &3.435E-03 &1.860E-03 &9.647E-04 &4.899E-04 &2.466E-04 &1.236E-04 \\
&0.769&0.885&0.947&0.978&0.991&0.996&
\\ \hline $\vr=2^{-2}$
&9.217E-03 &6.107E-03 &3.510E-03 &1.875E-03 &9.660E-04 &4.896E-04 &2.464E-04 \\
&0.594&0.799&0.905&0.957&0.980&0.991&
\\ \hline $\vr=2^{-3}$
&1.266E-02 &8.951E-03 &6.250E-03 &3.540E-03 &1.877E-03 &9.654E-04 &4.892E-04 \\
&0.500&0.518&0.820&0.915&0.960&0.981&
\\ \hline $\vr=2^{-4}$
&1.266E-02 &9.162E-03 &6.499E-03 &4.197E-03 &2.545E-03 &1.481E-03 &8.388E-04 \\
&0.466&0.495&0.631&0.722&0.781&0.820&
\\ \hline $\vr=2^{-5}$
&2.200E-02 &9.263E-03 &6.521E-03 &4.199E-03 &2.544E-03 &1.480E-03 &8.384E-04 \\
&1.248&0.506&0.635&0.723&0.781&0.820&
\\ \hline $\vr=2^{-6}$
&2.990E-02 &1.127E-02 &6.530E-03 &4.199E-03 &2.543E-03 &1.480E-03 &8.382E-04 \\
&1.408&0.787&0.637&0.723&0.781&0.820&
\\ \hline $\vr=2^{-7}$
&3.123E-02 &1.499E-02 &6.551E-03 &4.204E-03 &2.543E-03 &1.480E-03 &8.382E-04 \\
&1.058&1.194&0.640&0.725&0.781&0.820&
\\ \hline $\vr=2^{-8}$
&3.125E-02 &1.561E-02 &7.510E-03 &4.218E-03 &2.547E-03 &1.480E-03 &8.382E-04 \\
&1.001&1.056&0.832&0.728&0.783&0.821&
\\ \hline $\vr=2^{-9}$
&4.082E-02 &1.562E-02 &7.807E-03 &4.245E-03 &2.550E-03 &1.482E-03 &8.388E-04 \\
&1.386&1.001&0.879&0.735&0.783&0.821&
\\ \hline $\vr=2^{-10}$
&7.276E-02 &2.209E-02 &7.812E-03 &4.299E-03 &2.569E-03 &1.486E-03 &8.397E-04 \\
&1.720&1.499&0.862&0.743&0.790&0.824&
\\ \hline $\vr=2^{-11}$
&7.349E-02 &3.025E-02 &1.121E-02 &4.426E-03 &2.582E-03 &1.490E-03 &8.410E-04 \\
&1.281&1.432&1.341&0.778&0.793&0.825&
\\ \hline $\vr=2^{-12}$
&7.352E-02 &3.025E-02 &1.099E-02 &4.372E-03 &2.611E-03 &1.497E-03 &8.446E-04 \\
&1.281&1.461&1.330&0.743&0.803&0.826&
\\ \hline $\vr=2^{-13}$
&7.354E-02 &3.026E-02 &1.099E-02 &4.372E-03 &2.611E-03 &1.503E-03 &8.428E-04 \\
&1.281&1.461&1.330&0.743&0.797&0.835&
\\
\hline & .&. &.&. &. &. &. \\
 & .&. &.&. &. &. &. \\
 & .&. &.&. &. &. &.
\\ \hline $\vr=2^{-29}$
&7.360E-02 &3.027E-02 &1.100E-02 &4.372E-03 &2.611E-03 &1.503E-03 &8.428E-04 \\
&1.282&1.461&1.331&0.743&0.797&0.835&
\\ \hline $\vr=2^{-30}$
&7.360E-02 &3.027E-02 &1.100E-02 &4.372E-03 &2.611E-03 &1.503E-03 &8.428E-04 \\
&1.282&1.461&1.331&0.743&0.797&0.835&
\\ \hline $D^{N,M}$
&7.360E-02 &3.027E-02 &1.121E-02 &4.426E-03 &2.611E-03 &1.503E-03 &8.446E-04 \\
$Q^{N,M}$ &1.282&1.433&1.341&0.761&0.797&0.832&\\ \hline \hline
\end{tabular}}
\end{center}
\end{table}
Finally, we give some information about the distribution of the errors. In Figure~\ref{fig:YtwoMeshDifferences-bl} we display the  two-mesh nodal differences
$\vert (Y^{N,M}-\bar Y^{2N,2M})(x_i,t_j)\vert$ with $(x_i,t_j)\in Q^{N,M}$; and it is observed that the largest differences occur within the layers and, within the initial layer,  the maximum two-mesh differences occur at the earlier times.
 \begin{figure}[h!]
\centering
\resizebox{\linewidth}{!}{
	\begin{subfigure}[Entire domain $\bar Q$]{
		\includegraphics[scale=0.5, angle=0]{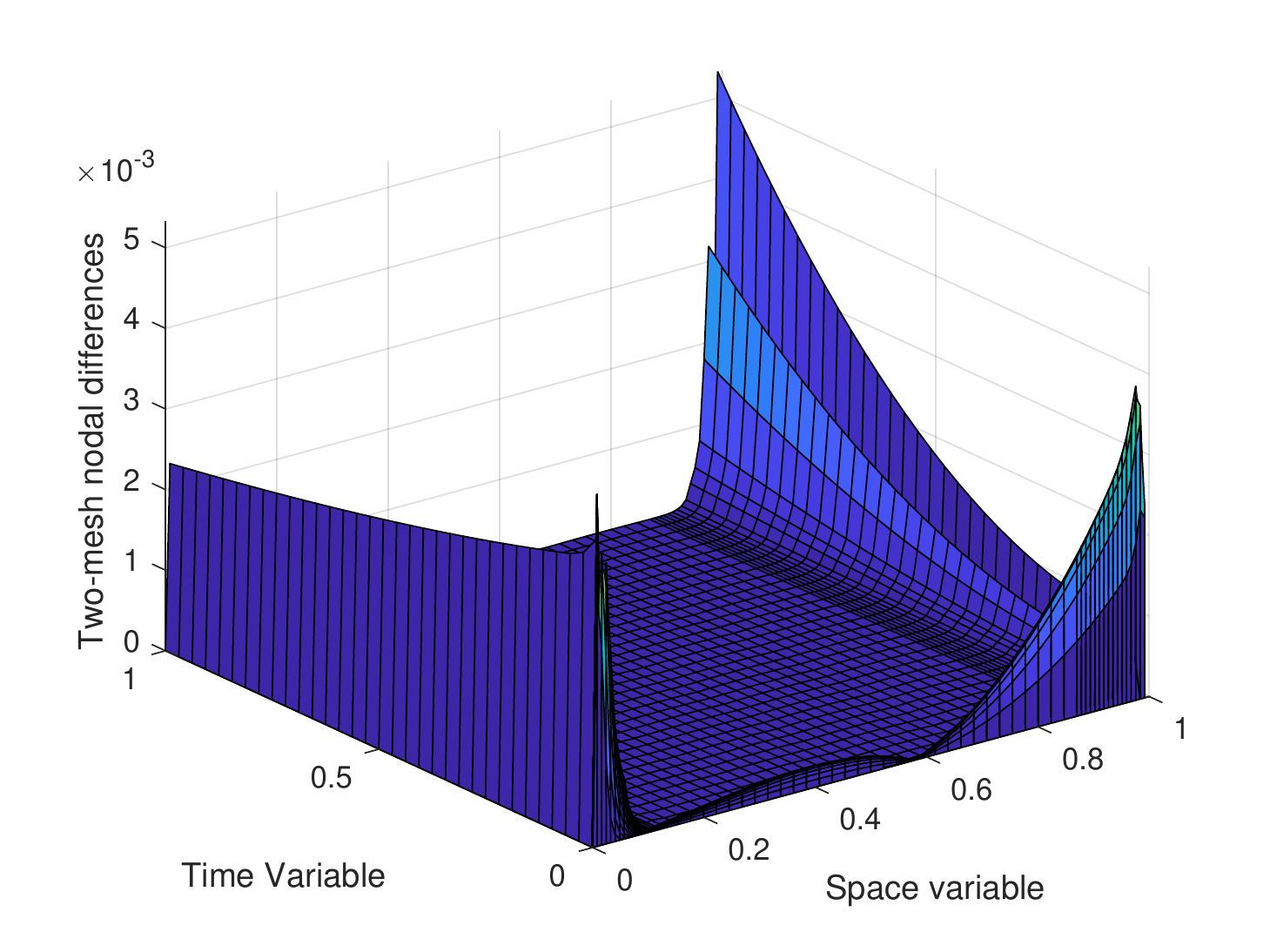}
		}
    \end{subfigure}
\begin{subfigure}[A zoom in on the corner $(0,0)$]{
		\includegraphics[scale=0.5, angle=0]{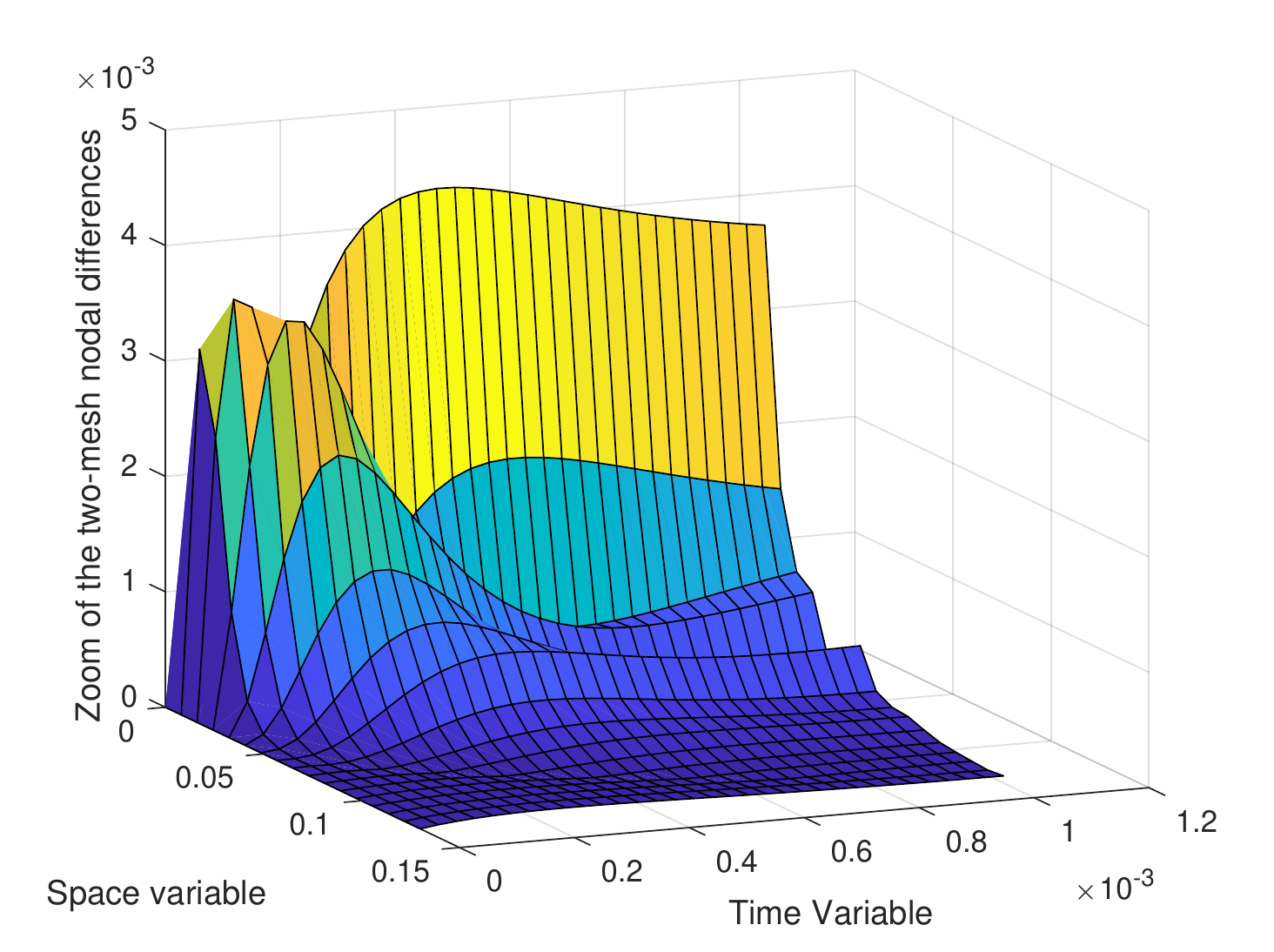}
		}
	\end{subfigure}
}
	\caption{Example~\eqref{ex3}: Two-mesh nodal differences $Y^N-Y^{2N}$ for problem~\eqref{ex3Y} on the coarsest mesh $\bar \Omega^{N,M}$ with $\vr =2^{-12}$ and $N=M=64$}
	\label{fig:YtwoMeshDifferences-bl}
 \end{figure}

\section*{Appendix 1: Compatibility conditions} \label{Appendix:Comp}

In this Appendix, we explicitly write down the compatibility conditions (see e.g. \cite{friedman, ladyz}) of  zero, first and second order associated with
a singularly perturbed parabolic problem in one space dimension. Consider the
following problem: Find $s(x,t)$ such that
\begin{subequations} \label{Classical}
\begin{align}
&Ls=\vr (s_t-s_{xx})+b(x,t)s=g(x,t), \quad (x,t) \in  Q,\\
&s(0,t)=g_L(t), \ s(1,t)=g_R(t) \ t\ge 0, \quad s(x,0)=\phi(x), \ 0<x<1.
\end{align}
\end{subequations}
Below we  place certain regularity and compatibility restrictions on the data in order that the solution   $s \in C^{n+\gamma} (\overline Q),\ n=2,4$.

Level zero-order compatibility conditions corresponds to:
\begin{subequations}\label{compatibility}
\begin{equation}\label{zer0}
\phi(0^+) =g_L(0)  \quad \hbox{and} \quad \phi(1^-) =g_R(0).
\end{equation}
Assuming (\ref{zer0}), we can write
\begin{eqnarray*}
s(x,t)&=& \Phi (x,t) + z(x,t), \quad (x,t) \in  Q \quad \hbox{where} \\
 \Phi (x,t) &:=&  \phi (x)  + (1-x)(g_L(t)-g_L (0)) + x
\bigl(g_R(t)-g_R (0) \bigr);
\end{eqnarray*}
 $ Lz = g - L \Phi;$ and $ z(x,t)=0, \ (x,t) \in \partial Q$. Note that
\begin{eqnarray*}
 L\Phi =  \ve\Bigl((1-x)g_L'(t) +x g'_R(t)  -\phi''(x)\Bigr) +b(x,t) \Phi .
\end{eqnarray*}
From \cite{ladyz}, if  $b,g, L\Phi \in C^{0+\gamma}(\bar Q)$ and the first-order compatibility conditions
\begin{eqnarray}
 \ve (g_R'(0)-\phi''(1^-))+b(1,0)\phi( 1^-)=  g(1,0) \label{first-x=1}
 \\ \ve (g'_L(0)-\phi ''(0^+))+b(0,0)\phi (0^+) = g(0,0) \label{first-x=0}
\end{eqnarray}
are satisfied, then $s \in C^{2+\gamma}(\bar Q)$.
 If  $b,g, L\Phi \in C^{2+\gamma}(\bar Q)$ and  we further assume second-order compatibility conditions
such that
\begin{eqnarray}
 (g-L\Phi)_t(0,0^+) + (g-L\Phi)_{xx}(0^+,0)=0; \label{second-at-0}\\
  (g-L\Phi)_t(1,0^+) + (g-L\Phi)_{xx}(1^-,0)=0; \label{second-at-1}
\end{eqnarray}
then the solution of (\ref{Classical}) $s \in C^{4+\gamma}(\bar Q)$. Note that the constraint (\ref{second-at-0}) corresponds to
\begin{align*}
(g_t + g_{xx})(0,0) & =\ve g''_L(0^+) + b(0,0)g'_L(0^+) +b_t(0,0)g_L(0^+)- \ve \phi ^{iv}(0^+) \\
& + 2b_{x}(0,0)\phi'(0^+) + b_{xx}(0,0)\phi(0^+)+b(0,0)\phi''(0^+).
\end{align*}
\end{subequations}

\section*{Appendix 2: Regularity of the function $y_2$}

Consider the solutions $z_n(x,t), n \geq 0$ of the following problems:
\begin{eqnarray*}
 \frac{\partial z_n}{\partial t}-\frac{\partial ^2 z_n}{\partial x^2 } +\frac{b(0,0)}{\ve} z_n= 0,\quad x >0,  t>0 \\
z_n(x,0) =0,\  x \geq 0; \quad z_n(0,t) =t^ne^{-\frac{b(0,0)t}{\ve}},\  t > 0.
\end{eqnarray*}
Note that
\[
z_0(x,t) =\hbox{erfc}\left(\frac{x}{2\sqrt{t}}\right) e^{-\frac{b(0,0)t}{\ve}}; \
 \ve \left\vert \frac{\partial  z_0}{\partial t }  (x,t) \right\vert \leq
 C \frac{1}{t}(\ve +t) e^{-\frac{\beta t}{\ve}};
\]
and
\begin{equation}\label{recurrence}
z_n = n \int _{s=0}^t z_{n-1}(x,s) e^{-\frac{b(0,0)(t-s)}{\ve}}\ ds;\ (z_n)_t +\frac{b(0,0)}{\ve} z_n= n z_{n-1}, n \geq 1.
\end{equation}
Observe that $z_n = v_n e^{-b(0,0) t/\ve}$ and the functions $\{ v_n \} _{n=0}^4$ are explicitly given in \cite[(5), p.538]{Flyer} and we also note that $(v_n)_{xx} = (v_n)_t = n v_{n-1}$.
From a maximum principle, we have  the following bounds:
\begin{equation}
\vert z_n(x,t) \vert\leq t^n e^{-\frac{b(0,0)t}{\ve}} \leq C\ve ^n e^{-\frac{ \beta t}{\vr}} ; \
n=0,1,2.
\end{equation}

Let us list some of the derivatives  of the fundamental function $z_0(x,t)$
\begin{eqnarray*}
\frac{\partial z_0}{\partial x} &=& \frac{-1}{\sqrt{\pi t}} e^{-\frac{x^2}{4t}}e^{-\frac{b(0,0)t}{\ve}} = O\left(\frac{1}{\sqrt{t}}\right) \\
\frac{\partial ^2 z_0}{\partial x^2 } &=&\frac{\partial z_0}{\partial t} +\frac{b(0,0)}{\ve} z_0 =\frac{x}{2t\sqrt{\pi t}}  e^{-\frac{x^2}{4 t}}e^{-\frac{b(0,0)t}{\ve}} = O\left(\frac{1}{ t}\right) \\
\frac{\partial ^3 z_0}{\partial x^3 } &=&
\frac{1}{2t\sqrt{\pi t}} (1-\frac{x^2}{2t}) e^{-\frac{x^2}{4 t}}e^{-\frac{b(0,0)t}{\ve}} = O\left(\frac{1}{ t\sqrt{t}}\right) \\
\frac{\partial ^4 z_0}{\partial x^4 } &=&
 \frac{\partial ^2z_0}{\partial t^2} +2\frac{b(0,0)}{\ve} \frac{\partial z_0}{\partial t}+\left(\frac{b(0,0)}{\ve} \right) ^2 z_0 \\
&=& \frac{-x}{4t^2\sqrt{\pi t}} \left(3 - \frac{x^2}{2t}\right) e^{-\frac{x^2}{4t}}e^{-\frac{b(0,0)t}{\ve}} = O\left(\frac{1}{t^2}\right)\\
\frac{\partial z_0}{\partial t} = O\left(\frac{1}{ \ve}\right) z_0 + O\left(\frac{1}{ t}\right) &,& \frac{\partial ^2z_0}{\partial t^2} = O\left(\frac{1}{ \ve ^2}\right) z_0 + O\left(\frac{1}{ \ve t}\right) + O\left(\frac{1}{ t^2}\right).
\end{eqnarray*}
Observe that the function
\[
h(x,t) := \frac{x}{\sqrt{ t}}  e^{-\frac{x^2}{4t}}, \ t >0, \quad h(x,0) :=0, \  0\leq x \leq 1 ;
\]
is bounded, but not continuous on $\bar Q$.
From this and the explicit expressions for the derivatives of $z_0$ given above, we deduce \footnote{For example,
\[
t ^2\frac{\partial ^2 z_0}{\partial x ^2} \in C^{0+\gamma} (\bar Q)\quad \hbox{and} \quad x^4 \frac{\partial  z_0}{\partial t} \in C^{0+\gamma} (\bar Q).
\]
}
that 
%
 \[
S_{i,j}(x,t):=  x^it^{j}z_0 \in C^{2+\gamma}(\bar Q),\quad  i+2j \geq 4\] and the second derivative in time of these  functions $S_{i,j}$, when $i+2j = 4$, are all bounded on   $\bar Q$, but not continuous. Moreover, for $i+2j \geq 4$, we have
\begin{equation}\label{star1}
\Bigl \vert \frac{\partial ^{n+m} }{\partial x^n \partial t^m} (S_{i,j}(x,t)) \Bigr \vert \leq C \ve ^{-m}e^{-\frac{\beta t}{\ve}}, \qquad 0 \leq n+2m \leq 4;
\end{equation}
and for  all  integers $m,n \geq 1$
\begin{eqnarray*}
L_0(t^nz_0) &=&\ve n t^{n-1} z_0;  \\
L_0(t^nx^mz_0) &=& \ve(x^m n t^{n-1}z_0 - m(m-1)x^{m-2}t^nz_0 - 2m x^{m-1} t^n(z_0)_x).
\end{eqnarray*}
We identify the following set of functions, $\{ \chi _{i,j} \}$,
\[
L_0\chi _{i,j} = S_{i,j}(x,t); \quad i,j=0,1 ,2...
\]
where the first few functions in the set $\{ \chi _{i,j} \}$ are explicitly given as
\begin{subequations}
\begin{eqnarray}
\ve \chi _{1,0}&=& x tz_0+t^2(z_0)_x  \quad \hbox{and} \quad  \vert   \chi _{1,0} (0,t) \vert \leq C \sqrt{t} e^{-\frac{\beta t}{\ve}}; \label{leading-order-sing}\\
\ve\chi _{0,1}&=&
\frac{t^2z_0}{2}  \quad \hbox{and} \quad  \vert  \chi _{0,1} (0,t) \vert \leq Cte^{-\frac{\beta t}{\ve}};   \\
\ve \chi _{2,0}&=&
(x^2t+t^2) z_0+2xt^2(z_0)_x +(4/3)t^3(z_0)_{xx}, \\\hbox{and} \quad
&&   \ve \chi _{2,0} (0,t) =t^2e^{-\frac{b(0,0) t}{\ve}};  \nonumber \\
\ve \chi _{1,1}&=&
\frac{3x t^2z_0+2t^3(z_0)_x}{6} \ \hbox{and} \  \vert  \chi _{1,1} (0,t) \vert \leq  Ct\sqrt{t} e^{-\frac{\beta t}{\ve}} \\
\ve \chi _{3,0}&=&
(x^3 t+3x t^2)z_0+(4t^3+3x^2t^2)(z_0)_x+4xt^3(z_0)_{xx}+2t^4(z_0)_{xxx} \nonumber  \\
&& \vert   \chi _{3,0} (0,t) \vert \leq C t\sqrt{t} e^{-\frac{\beta t}{\ve}} \nonumber .
\end{eqnarray}
\end{subequations}
Using the recurrence relation (\ref{recurrence}) and the above properties of $z_0$ we have that
\begin{subequations}
\begin{eqnarray}
 \left \vert \frac{\partial ^{2}z_1}{\partial x^{2}}  (x,t) \right\vert  +  \left\vert \frac{\partial  z_1}{\partial t }  (x,t) \right\vert &\leq&
 C e^{-\frac{\beta t}{\ve}}; \\
 \left \vert \frac{\partial ^{3}z_1}{\partial x^{3}}  (x,t) \right\vert  &\leq&
 C \frac{1}{\sqrt{t}}  e^{-\frac{x^2}{4t}} e^{-\frac{b(0,0) t}{\ve}};  \\
 \left \vert \frac{\partial ^{4}z_1}{\partial x^{4}}  (x,t) \right\vert  + \ve \left\vert \frac{\partial ^2 z_1}{\partial t ^2}  (x,t) \right\vert &\leq&
 C \frac{1}{t} (\ve +t)e^{-\frac{\beta t}{\ve}};  \\
 \left \vert \frac{\partial ^{4}z_2}{\partial x^{4}}  (x,t) \right\vert  +  \left\vert \frac{\partial  ^2z_2}{\partial t ^2}  (x,t) \right\vert &\leq&
 C e^{-\frac{\beta t}{\ve}} .
\end{eqnarray}
\end{subequations}
Note also that
\begin{align*}
&  \left \vert \frac{\partial ^2 z_0}{\partial t^2} (x,t) \right \vert \leq \frac{C}{(\ve +t)^2}e^{-\frac{b(0,0)t}{\ve}}; \ \left \vert \frac{\partial ^2 z_1}{\partial t^2}(x,t) \right \vert \leq \frac{C}{t}e^{-\frac{b(0,0)t}{\ve}}; \\
&  \left \vert \frac{\partial ^2 z_2}{\partial t^2}(x,t) \right \vert \leq Ce^{-\frac{b(0,0)t}{\ve}}.
\end{align*}

Recall that the solution   $u$  of problem (\ref{Cproblem}) is discontinuous at the point $(0,0)$ and by subtracting the discontinuous function  $A_0z_0$, we see that $y=u-A_0z_0$ satisfies zero order compatibility at the point $(0,0)$.
Hence the solution $y$ of problem (\ref{eq:ComponentY2}) is a continuous function. By subtracted off appropriate multiples $A_n$ of $z_n$ (see (\ref{recurrence})) from $u$ we can satisfy up to the $n^{th}$ order compatibility conditions at the point $(0,0)$. Since $L_0z_n=0$, one can check that
\[
L(u-A_0z_0), L(u-A_0z_0- A_1z_1)  \in C^{0+\gamma}(\bar Q)
\]
and this implies that  $(u-A_0z_0)\in C^{2+\gamma}(\bar Q)$.
 Thus $y \in C^{0}(\bar Q)$ and $(y-A_1z_1)\in C^{2+\gamma}(\bar Q)$, but $(y- A_1z_1-A_2z_2)  \not \in C^{4+\gamma}(\bar Q)$.
We  now define the following two functions
\[
y_1:=y-A_1z_1 \quad \hbox{and} \quad y_2:= y_1- A_2 z_2
- A_0\Psi ;
\]
where
\begin{eqnarray*}
\Psi (x,t) := b_t(0,0) \chi _{0,1} + \frac{b_{xx}(0,0)}{2} \chi _{2,0} +    b_{xt}(0,0) \chi _{1,1} +  \frac{b_{xxx}(0,0)}{6} \chi _{3,0} \\
\hbox{and} \quad L_0\Psi = \left(b_t(0,0) t + \frac{b_{xx}(0,0)}{2} x^2 +   b_{xt}(0,0) xt +  \frac{b_{xxx}(0,0)}{6} x^3\right)z_0.
\end{eqnarray*}
We shall see below that the additional term $A_0\Psi$ 
has been included so  that  $y_2\in C^{4+\gamma}(\bar Q) $. The amplitude $A_0$ has been determined above. Below we specify the amplitudes $A_1$ and $A_2$.
Observe that the function $y_1$ satisfies
\begin{eqnarray*}
Ly_1 = f - (b(x,t)-b(0,0))\bigl(A_1z_1 + A_0z_0 \bigr); \\
y_1(0,t) =g_L(t)  -(A_0z_0 +A_1z_1)(0,t);\  y_1(1,t)= g_R(t) - (A_0z_0 +A_1z_1)(1,t); \\
y_1(x,0)= \phi (x).
\end{eqnarray*}
 From~\eqref{first-x=0} in the first Appendix, first order compatibility is satisfied (for $y_1$) if $A_1$ is such that
\begin{equation}\label{first}
f(0,0) = \ve (g_L'(0^+) - A_1 - \phi _{xx}(0^+)) + b(0,0)(A_0+ \phi (0)).
\end{equation}
In general, $A_1 = O(\ve ^{-1})$.
Since $Ly_1 \in   C^{0+\gamma}(\bar Q) $, then $y_1 \in  C^{2+\gamma}(\bar Q) $.

Next we move onto the regularity of $y_2$. Note first that, since $b_x(0,0)=0$,
\[
(L-L_0)z_n = \left(b_t(0,0) t + \frac{b_{xx}(0,0)}{2} x^2 +   b_{xt}(0,0) xt +  \frac{b_{xxx}(0,0)}{6} x^3\right)z_n + \ \hbox{H.O.T.} .
\]
Hence,
\begin{eqnarray*}
Ly_2 &=& Ly_1- (L-L_0)(A_2z_2 + A_0\Psi ) - A_0 L_0\Psi
\\
&=& f- (L-L_0)\bigl(A_1z_1 + A_0z_0 \bigr)-A_0 L_0\Psi - (b(x,t)-b(0,0))(A_2z_2 + A_0\Psi )
\\
&=& f  +O(t^2+x^4 +x^2t) A_0z_0- (L-L_0)\bigl(A_1z_1  \bigr)- (b(x,t)-b(0,0))(A_2z_2 + A_0\Psi)
\\ &=& f +O(t^2+x^4 +x^2t) A_0z_0 +O(t +x^2) A_2z_2 +  O(t +x^2) A_1 z_1 ;\\
&&+ \ve ^{-1} A_0O(t +x^2)\bigl( O(x^2t+t^2) z_0 + O(xt^2+t^3) (z_0)_x +O(t^3) (z_0)_{xx}+ O(t^4) (z_0)_{xxx}\bigr)
\end{eqnarray*}
and the boundary and initial conditions are
\begin{eqnarray*}
&&y_2(0,t)=g_L(t) - \bigl(A_0z_0+ A_1z_1+ A_2 z_2+ A_0\Psi \bigr)(0,t)  \\
&&=g_L(t)  - \left(A_0+A_1t+ A_2 t^2\right)e^{-\frac{b(0,0)t}{\ve}} - A_0\frac{t^2}{2\ve}  b_t(0,0) e^{-\frac{b(0,0)t}{\ve}} \\
&&- A_0\frac{t^2}{2\ve} \left(b_{xx}(0,0) \left(z_0+\frac{4t}{3}\frac{\partial ^2 z_0}{\partial x^2}\right)+    \frac{2b_{xt}(0,0) t}{3} \frac{\partial  z_0}{\partial x}+  \frac{2b_{xxx}(0,0)}{3} \left(2t\frac{\partial  z_0}{\partial x} +t^2\frac{\partial ^3 z_0}{\partial x^3}\right)\right)(0,t);\\
&&y_2(1,t)= y(1, t)-\bigl(A_1z_1+ A_2 z_2+ A_0\Psi \bigr)(1,t);\quad
y_2(x,0)=\phi (x).
\end{eqnarray*}
Note that $\lim _{t \rightarrow 0^+} y_2(0,t) = g_L(0^+) -A_0$ and
\begin{align*}
\lim _{t \rightarrow 0^+} \frac{\partial y_2(0,t)}{\partial t}
&=g'_L(0^+)  - A_1 +A_0\frac{b(0.0)}{\ve}  \\
\lim _{t \rightarrow 0^+} \frac{\partial ^2 y_2(0,t)}{\partial t^2}
&=g''_L(0^+) +2\ve ^{-1}A_1 b(0,0)- 2A_2  -2\vr^{-1} A_0   (b_t(0,0) +b_{xx}(0,0)) \\
& -A_0(\frac{b(0,0)}{\ve })^2.
\end{align*}
First order compatibility is satisfied (for $y_2$) if $A_1$ is  such that~\eqref{first} is satisfied
and the above construction has been designed in order  that $Ly_2 \in   C^{2+\gamma}(\bar Q) $.
Finally second order compatibility is satisfied (for $y_2$) if $A_2$ is such that
\begin{align}
&\vr (g_L''(0^+) - \phi ^{iv}(0^+))
+(A_1+g'_L(0^+)) b(0,0)- 2\ve A_2  -2 A_0   (b_t(0,0) +b_{xx}(0,0))  \nonumber
\\
& - A_0\ve ^{-1} b^2(0,0) + b_t(0,0)(g_L(0)-A_0)  + b_{xx}(0,0)\phi(0^-)+b(0,0) \phi''(0^+)  \nonumber \\
& \quad =\bigl(f_t+f_{xx} \bigr)(0,0). \label{A2}
\end{align}
Note that
$
A_0= O(1), \quad A_1= O(\ve ^{-1}) \quad \hbox{and} \quad A_2= O(\ve ^{-2}).
$
By this construction we have that $y_2 \in   C^{4+\gamma}(\bar Q)$. Moreover,
\begin{equation}\label{star2}
\Bigl \vert \frac{\partial ^{i+j} }{\partial x^i \partial t^j} (Ly_2(x,t)) \Bigr \vert \leq C \ve ^{-j}e^{-\frac{\beta t}{\ve}}, \quad 0 \leq i+2j \leq 2.
\end{equation}

\end{document}